\documentclass{amsart}

% Packages and options
\usepackage{amsmath}
\usepackage{amssymb}
\usepackage{bibentry}
\usepackage{enumitem}
\usepackage{graphicx}
\usepackage{mathrsfs}
\usepackage{tabu}
\usepackage{tikz}
\usepackage[comma,numbers,sort,square]{natbib}
\usepackage[colorlinks=true,
            linktocpage=true,
            linkcolor=magenta,
            citecolor=magenta,
            urlcolor=magenta]{hyperref}
\PassOptionsToPackage{hyphens}{url}
\usepackage{cleveref}

% Libraries
\usetikzlibrary{arrows}
\usetikzlibrary{shapes}
\usetikzlibrary{snakes}

% Theorem styles
\newtheorem{theorem}{Theorem}[section]
\newtheorem{proposition}[theorem]{Proposition}

\newtheorem{corollary}[theorem]{Corollary}
\theoremstyle{definition}
\newtheorem{definition}[theorem]{Definition}

\newtheorem{question}[theorem]{Question}

% Macros
% Common symbols

\DeclareMathOperator{\res}{\upharpoonright}
\newcommand{\ran}{\operatorname{ran}}
\newcommand{\dom}{\operatorname{dom}}

% Lists
\newcommand{\seq}[1]{\langle #1 \rangle}

\newcommand{\set}[1]{\{ #1 \}}

% Logic

% Computability theory

% Model theory

% Systems and subsystems

\newcommand{\PA}{\mathsf{PA}}
\newcommand{\RCA}{\mathsf{RCA}}
\newcommand{\ACA}{\mathsf{ACA}}
\newcommand{\WKL}{\mathsf{WKL}}

% Induction and bounding

% Combinatorial principles
\newcommand{\RT}{\mathsf{RT}}
\newcommand{\COH}{\mathsf{COH}}

\newcommand{\SRT}{\mathsf{SRT}}

\newcommand{\WWKL}{\mathsf{WWKL}}
\newcommand{\DNR}{\mathsf{DNR}}

\newcommand{\TS}{\mathsf{TS}}

% Strong reductions

\newcommand{\cred}{\leq_{\text{\upshape c}}}

\newcommand{\uequiv}{\equiv_{\text{\upshape W}}}
\newcommand{\ured}{\leq_{\text{\upshape W}}}
\newcommand{\nured}{\nleq_{\text{\upshape W}}}
\newcommand{\suequiv}{\equiv_{\text{\upshape sW}}}
\newcommand{\sured}{\leq_{\text{\upshape sW}}}
\newcommand{\nsured}{\nleq_{\text{\upshape sW}}}
\newcommand{\Tred}{\leq_{\text{\upshape T}}}

\newcommand{\fopart}{{}^1}
\newcommand{\PROB}{\mathcal{P}}
\newcommand{\FPROB}{\mathcal{F}}

\newcommand{\C}{\mathsf{C}}

\newcommand{\Id}{\mathsf{Id}}

\begin{document}

\title{On the first-order parts of problems in the Weihrauch degrees}

\author{Damir D. Dzhafarov}
\address{Department of Mathematics\\
University of Connecticut\\
Storrs, Connecticut U.S.A.}
%341 Mansfield Road\\ Storrs, Connecticut 06269-1009 U.S.A.}
%\curraddr{}
\email{damir@math.uconn.edu}

\author{Reed Solomon}
\address{Department of Mathematics\\
University of Connecticut\\
Storrs, Connecticut U.S.A.}
%341 Mansfield Road\\ Storrs, Connecticut 06269-1009 U.S.A.}
%\curraddr{}
\email{solomon@math.uconn.edu}

\author{Keita Yokoyama}
\address{Mathematical Institute\\
Tohoku University\\
Sendai, Japan.}
%\curraddr{}
\email{keita.yokoyama.c2@tohoku.ac.jp}

\thanks{Dzhafarov and Solomon were partially supported by a Focused Research Group grant from the National Science Foundation of the United States, DMS-1854355. Yokoyama was partially supported by JSPS KAKENHI grants, numbers JP19K03601 and JP21KK0045. The authors thank Alberto Marcone, Ludovic Patey, and Manlio Valenti for helpful comments during the preparation of this paper.}

\begin{abstract}
	We introduce the notion of the \emph{first-order part} of a problem in the Weihrauch degrees. Informally, the first-order part of a problem $\mathsf{P}$ is the strongest problem with codomaixn $\omega$ that is Weihrauch reducible to $\mathsf{P}$. We show that the first-order part is always well-defined, examine some of the basic properties of this notion, and characterize the first-order parts of several well-known problems from the literature.
\end{abstract}

\maketitle

\section{Introduction}\label{sec:intro}

This paper continues the investigation of the interface between reverse mathematics and computable analysis. The connection was first suggested by Gherardi and Marcone \cite{GM-2008}, and later independently by Dorais, Dzhafarov, Hirst, Mileti, and Shafer \cite{DDHMS-2016}. In the past few years, this area has blossomed into a rich and productive area of research, with by now many papers dedicated to it.

Reverse mathematics uses a combined computability-theoretic and proof-theoretic point of view to analyze the logical strength of theorems that can be formalized in second-order arithmetic. Much of this focuses on \emph{second-order} strength, meaning more specifically, on which set-existence axioms are necessary and sufficient to prove a given such theorem. But there has also been a great deal of work on \emph{first-order} strength, meaning on which number-theoretic results are derivable from a particular theorem. The strongest such result is commonly referred to as the \emph{first-order part} of the theorem. It is an impressive fact that theorems entirely about \emph{sets} of natural numbers (and by extension, mathematical objects that can be represented by sets of numbers) can have nontrivial and often surprising first-order parts.

Computably analysis, on the other hand, is concerned with computational problems rather than theorems, and on how these relate to one another under various reducibilities, the most important of which is \emph{Weihrauch reducibility}. As we review in detail below, there is a well-known correspondence between such problems and a common type of theorem analyzed in reverse mathematics, and this connection has been strengthened by some of the recent investigations mentioned above.

We refer the reader to Simpson \cite{Simpson-2009}, Hirschfeldt \cite{Hirschfeldt-2014}, and Dzhafarov and Mummert \cite{DM-2022} for background in reverse mathematics. We include some basic definitions from computable analysis below, and refer the reader to Brattka and Pauly \cite{BP-TA} and Brattka, Gherardi, and Pauly \cite{BGP-TA} for more complete introductions.

In this paper, we introduce the notion of the \emph{first-order part of a problem}. By analogy with the reverse mathematics setting, this is informally meant to capture the strongest ``number-theoretic'' problem that is Weihrauch reducible to that problem. We will explore the connections between our definition and the first-order parts of theorems in reverse mathematics. We will then classify, as case studies, the first-order parts of several major problems from the literature. We note that, since the circulation of a preprint of this paper, there have been several papers continuing the investigation of the first-order parts of problems, including by Sold\`{a} and Valenti \cite{SV-TA}, Goh, Pauly, and Valenti \cite{GPV-2021}, and Cipriani, Marcone, and Valenti \cite{CMV-TA}.

We begin, in this section, by laying down some background for the rest of the paper. Our computability-theoretic notation and terminology is largely standard, following, e.g., Soare \cite{Soare-2016} or Downey and Hirschfeldt \cite{DH-2010}. We highlight some small departures from this below.

We embed $\omega$ in $\omega^{\omega}$ by identifying $x \in \omega$ with $f \in \omega^\omega$ such that $f(0) = x$ and $f(y) = 0$ for all $y > x$. Throughout, \emph{set} will mean subset of $\omega$ unless otherwise noted. We identify sets with their characteristic functions, so that all sets are elements of $2^\omega \subseteq \omega^\omega$. For $f,g \in \omega^\omega$, we write $\seq{f,g}$ for the (Turing) join of $f$ and $g$. If $f \in \omega^\omega$ and $\sigma \in \omega^{<\omega}$, then $\seq{f,\sigma}$ refers to the string $\tau$ of length $2|\sigma|$ such that $\tau(2x) = f(x)$ and $\tau(2x+1) = \sigma(x)$ for all $x < |\sigma|$. We define $\seq{\sigma,f}$ analogously. Joins of more than two objects are handled as usual, with the understanding that the number of objects involved is uniformly computable from the join itself.

We also write $\seq{y_0,\ldots,y_{k-1}}$ for the string $\sigma \in \omega^k$ with $\sigma(x) = y_x$ for all $x < k$. For $b \in \{0,1\}$, we write $b^k$ for the string $\sigma \in \omega^k$ with $\sigma(x) = b$ for all $x < k$, and $b^\omega$ for the sequence $f \in \omega^\omega$ with $f(x) = b$ for all $x$. For $\sigma,\tau \in \omega^{<\omega}$, we write $\sigma \tau$ or $\sigma^\frown{}\tau$ for the concatenation of $\sigma$ by $\tau$, and similarly for the concatenation of finite strings by infinite sequences.

We regard all Turing functionals $\Phi$ as partial continuous maps $\omega^\omega \to \omega^\omega$, and write $\Phi(f) = g$ if $\Phi^f(x) \downarrow = g(x)$ for all $x \in \omega$. In this paper, we also write $\Phi(f)(x)$ in place of $\Phi^f(x)$ and $\Phi(f,g)$ in place of $\Phi(\seq{f,g})$, etc. We follow the convention that if $\Phi(f)(x) \downarrow$ for some $f \in \omega^\omega$ and some $x \in \omega$ then $\Phi(f)(y) \downarrow$ for all $y \leq x$. For $\sigma \in \omega^{<\omega}$, we write $\Phi(\sigma)(x) \downarrow = y$ if $\Phi(f)(x) \downarrow = y$ for all $f \in \omega^\omega$ extending $\sigma$ in at most $|\sigma|$ many steps. In particular, if $\Phi(f)(x) \downarrow$ then $\Phi(f \res k)(x) \downarrow$ for some $k \in \omega$.

We use uppercase Greek letters for arbitrary Turing functionals, $\Phi,\Delta,\Gamma$, etc., and let $\Phi_0,\Phi_1,\ldots$ denote a fixed computable listing of all Turing functionals.

\begin{definition}
\
	\begin{enumerate}
		\item A \emph{problem} is a partial multifunction $\mathsf{P} : \subseteq \omega^{\omega} \rightrightarrows \omega^\omega$. Each $f \in \dom(\mathsf{P})$ is an \emph{instance of $\mathsf{P}$}, or simply a \emph{$\mathsf{P}$-instance}, and each $g \in \mathsf{P}(f)$ is a \emph{solution to $f$ in $\mathsf{P}$}, or simply a \emph{$\mathsf{P}$-solution to $f$}. We let $\PROB$ denote the class of all problems.
		\item A problem $\mathsf{Q}$ is \emph{Weihrauch reducible} to a problem $\mathsf{P}$, written $\mathsf{Q} \ured \mathsf{P}$, if there exist Turing functionals $\Phi$ and $\Psi$ such that for all $f \in \dom(\mathsf{Q})$ we have $\Phi(f) \in \dom(\mathsf{P})$, and for every $\widehat{g} \in \mathsf{P}(\Phi(f))$ we have $\Psi(f,\widehat{g}) \in \mathsf{Q}(f)$. In this case, we also say $\mathsf{Q}$ is Weihrauch reducible to $\mathsf{P}$ \emph{via} $\Phi$ and $\Psi$.
		\item A problem $\mathsf{Q}$ is \emph{strongly Weihrauch reducible} to a problem $\mathsf{P}$, written $\mathsf{Q} \sured \mathsf{P}$, if there exist Turing functionals $\Phi$ and $\Psi$ such that for all $f \in \dom(\mathsf{Q})$ we have $\Phi(f) \in \dom(\mathsf{P})$, and for every $\widehat{g} \in \mathsf{P}(\Phi(f))$ we have $\Psi(\widehat{g}) \in \mathsf{Q}(f)$. In this case, we say $\mathsf{Q}$ is strongly Weihrauch reducible to $\mathsf{P}$ \emph{via} $\Phi$ and $\Psi$.
	\end{enumerate}
\end{definition}

\noindent The equivalence classes under $\ured$ form the \emph{Weihrauch degrees}.

There is a natural correspondence between problems and theorems of the form $(\forall X) [\varphi(X) \to (\exists Y) \psi(X,Y)]$ for $X,Y \in \omega^\omega$. Any problem gives rise to a theorem of this form by letting $\varphi$ define the instances, and $\psi$ the solutions. Conversely, any theorem of this form gives rise to the problem whose instances are the $X$ satisfying $\varphi(X)$, and the solutions to a given $X$ are the $Y$ satisfying $\psi(X,Y)$. (See \cite[Chapter 4]{DM-2022} for a more detailed discussion.) When $\varphi$ and $\psi$ are arithmetical properties, the theorems of this form are $\Pi^1_2$ statements of second-order arithmetic, which constitute the bulk of theorems studied in reverse mathematics. The specific problems we consider in examples in the sequel will all correspond to $\Pi^1_2$ theorems in this fashion, and we will move back and forth between the problem and theorem perspectives.

For completeness, we include definitions of some of the principal operations on the Weihrauch degrees.

\begin{definition}
	Fix $\mathsf{P},\mathsf{Q} \in \PROB$.
	\begin{enumerate}
		\item $\mathsf{P} \sqcup \mathsf{Q}$ (the \emph{join} of $\mathsf{P}$ and $\mathsf{Q}$) is the problem with domain $\dom(\mathsf{P}) \sqcup \dom(\mathsf{Q})$ and $(\mathsf{P} \sqcup \mathsf{Q})(\seq{0,f}) = \set{0} \times \mathsf{P}(f)$ and $(\mathsf{P} \sqcup \mathsf{Q})(\seq{1,g}) = \set{1} \times \mathsf{Q}(g)$ for all $f \in \dom(\mathsf{P})$ and $g \in \dom(\mathsf{Q})$.
		\item $\mathsf{P} \sqcap \mathsf{Q}$ (the \emph{meet} of $\mathsf{P}$ and $\mathsf{Q}$) is the problem with domain $\dom(\mathsf{P}) \times \dom(\mathsf{Q})$ and $(\mathsf{P} \sqcap \mathsf{Q})(\seq{f,g}) = (\set{0} \times \mathsf{P}(f)) \cup (\set{1} \times \mathsf{Q}(g))$ for all $f \in \dom(\mathsf{P})$ and $g \in \dom(\mathsf{Q})$.
		\item $\mathsf{P} \times \mathsf{Q}$ (the \emph{parallel product} of $\mathsf{P}$ and $\mathsf{Q}$) is the problem with domain $\dom(\mathsf{P}) \times \dom(\mathsf{Q})$ and $(\mathsf{P} \times \mathsf{Q})(\seq{f,g}) = \mathsf{P}(f) \times \mathsf{Q}(g)$ for all $f \in \dom(\mathsf{P})$ and $g \in \dom(\mathsf{Q})$.
		\item $\mathsf{P} * \mathsf{Q}$ (the \emph{compositional product} of $\mathsf{P}$ and $\mathsf{Q}$) is the problem whose instances are all pairs $\seq{g,\Delta}$ such that $g \in \dom(\mathsf{Q})$ and $\Delta$ is a Turing functional with $\Delta(g,\hat{g}) \in \dom(\mathsf{P})$ for all $\hat{g} \in \mathsf{Q}(g)$, with the solutions to any such $\seq{g,\Delta}$ being all $\seq{\hat{f},\hat{g}}$ such that $\hat{g} \in \mathsf{Q}(g)$ and $\hat{f} \in \mathsf{P}(\Delta(g,\hat{g}))$.
		\item $\mathsf{P}^*$ (the \emph{finite parallelization} of $\mathsf{P}$) is the problem whose instances are all $\seq{k,\seq{f_0,\ldots,f_{k-1}}}$ where $k \geq 1$ and $f_0,\ldots,f_{k-1}$ are $\mathsf{P}$-instances, with $\mathsf{P}^*(\seq{k,\seq{f_0,\ldots,f_{k-1}}}) = \mathsf{P}(f_0) \times \cdots \times \mathsf{P}(f_{k-1})$.
		\item $\mathsf{P}'$ (the \emph{jump} of $\mathsf{P}$) is the problem whose instances are all pairs $\seq{f,i}$ such that $f \in \omega^\omega$ and $i$ is a $\Delta^{0,f}_2$ index for a $\mathsf{P}$-instance $g$ of $\mathsf{P}$, with a solution to any such $\seq{f,i}$ being all the $\mathsf{P}$-solutions to $g$. We write $\mathsf{P}^{(0)} = \mathsf{P}$, and for $n \in \omega$, $\mathsf{P}^{(n+1)} = (\mathsf{P}^{(n)})'$.
	\end{enumerate}	
\end{definition}

\noindent The Weihrauch degrees form a lattice with $\sqcup$ and $\sqcap$ as join and meet, respectively. We refer the reader to Brattka and Pauly \cite{BP-TA} for a comprehensive overview of the algebraic structure of the Weihrauch degrees under these (and many other) operations.

We will formally define the first-order part of a problem in the next section. For now, we make explicit the idea of a first-order, or ``number-theoretic'', problem.

\begin{definition}
	$\mathsf{P} \in \PROB$ is a \emph{first-order} problem if $\mathsf{P}(f) \subseteq \omega$ for all $f \in \dom(\mathsf{P})$. We let $\FPROB$ denote the class of all first-order problems.
\end{definition}

\noindent $\FPROB$ is a large class of problems with many important and ubiquitous members, e.g., $\mathsf{LPO}$, $\C_2$, $\lim_{\mathbb{N}}$, etc., which are commonly encountered in the literature on Weihrauch degrees. There are also many problems which, while not first-order themselves, are Weihrauch equivalent to first-order problems. The first-order part of a problem, which we define in the next section, will turn out to be the $\ured$-largest member of $\FPROB$ that can be Weihrauch reduced to it.

One aspect of our interest is in how the first-order part of a problem in the present setting compares with its first-order part as a theorem of second-order arithmetic (in cases where both perspectives make sense). In reverse mathematics, first-order parts are often measured against induction and bounding schemes, which are themselves stratified by the arithmetical hierarchy. The commonly used base theory in reverse mathematics, $\RCA_0$, includes induction for $\Sigma^0_1$ formulas, $\mathsf{I}\Sigma^0_1$, and above this lies the \emph{Kirby-Paris hierarchy} of successively stronger schemes: $\mathsf{B}\Sigma^0_2 < \mathsf{I}\Sigma^0_2 < \mathsf{B}\Sigma^0_3 < \mathsf{I}\Sigma^0_3 < \cdots$. (See \cite[Sections 6.1--6.3]{DM-2022} for definitions and further details.) There is a natural correspondence between these schemes and certain basic problems from computable analysis. We recall their definitions. Here and below, we identify each $k \in \omega$ with the set $\set{i \in \omega : i < k}$ for notational convenience.

\begin{definition}
	Fix $k \in \omega \cup \set{\omega}$. The \emph{choice problem on $k$}, $\C_k$, is the problem whose instances are all enumerations of proper subsets of $k$, with the solutions being all $i \in k$ that are not enumerated.
\end{definition}

\noindent For convenience, we will usually think of the instances of $\mathsf{C}_k$, more explicitly, as functions $v : \omega \to k + 1$ such that: $k \nsubseteq \ran(v)$; if $v(s) \neq v(t)$ for some $s < t$ then $v(t) \neq k$; and if $v(s) = v(t)$ for some $s < t$ then $v(s) = v(u) = v(t)$ for all $s < u < t$. In this way, $\set{ i < k : (\exists t \leq s)[v(t) = i]}$ indicates the set of $i$ enumerated at or before stage $s$. It is customary to write $\mathsf{C}_{\mathbb{N}}$ in place of $\mathsf{C}_\omega$.

As noted by Brattka, Gherardi, and Pauly \cite[Section 9.3]{BGM-2012}, $(\mathsf{C}_{\mathbb{N}})^{(n-1)}$ corresponds to induction for $\Sigma^0_n$ formulas, $\mathsf{I}\Sigma^0_n$, while $(\mathsf{C}_2^*)^{(n-1)}$ corresponds to bounding for $\Sigma^0_n$ formulas, $\mathsf{B}\Sigma^0_n$. As we will see, this correspondence extends to first-order parts in many, but not all, examples. Notice that for any $k \in \omega \cup \set{\omega}$, $\mathsf{C}_k$ belongs to $\FPROB$, as does any combinations of $\mathsf{C}_k$ under finite parallelizations and any number of jumps.

In the context of classical reverse mathematics, we think of mathematical principles as ``trivial'' if they are provable in the base theory, typically $\RCA_0$. The analogous notion for problems under Weihrauch reducibility is that of being \emph{uniformly computable true}. Let $\Id : \omega^\omega \to \omega^\omega$ be the identity problem, i.e., $\Id(f) = \set{f}$ for all $f \in \omega^\omega$.

%So, too, a principle is regarded as having trivial first-order part if any first-order consequence of the principle over $\RCA_0$ is provable in $\RCA_0$ alone; that is, if the principle is $\Pi^1_1$-conservative over $\RCA_0$.

%In this section, we explore the analogue of this notion for first-order parts in the Weihrauch degrees. We begin by recalling the following definitions. Let $\Id : \omega^\omega \to \omega^\omega$ be the identity problem, i.e., $\Id(f) = f$ for all $f \in \omega^\omega$.

\begin{definition}
	Fix $\mathsf{P} \in \PROB$.
	\begin{enumerate}
		\item $\mathsf{P}$ is \emph{computably true} if $\mathsf{P} \cred \Id$, i.e., if every $\mathsf{P}$-instance $f$ has an $f$-computable solution.
		\item $\mathsf{P}$ is \emph{uniformly computably true} if $\mathsf{P} \ured \Id$, i.e., if there exists a Turing functional $\Gamma$ such that $\Gamma(f) \in \mathsf{P}(f)$ for every $f \in \dom(\mathsf{P})$.	
	\end{enumerate}	
\end{definition}

\noindent In \Cref{sec:UNIF}, we will explore problems whose first-order parts are trivial, and identify an even stronger property than being uniformly computably true that arguably aligns more closely with the reverse mathematics notion of triviality.

We end this section with one simple yet at first glance somewhat surprising application of isolating the class $\FPROB$, which is otherwise unrelated to our discussion. This is that $\FPROB$ can be used to characterize computably true problems.

\begin{theorem}\label{P:comptruechar}
	A problem $\mathsf{P} \in \PROB$ is computably true if and only if there exists a $\mathsf{Q} \in \FPROB$ such that $\mathsf{P} \ured \mathsf{Q}$.
\end{theorem}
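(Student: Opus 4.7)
I would prove both directions directly from the definitions, with the key observation being that any natural number, under the embedding $\omega \hookrightarrow \omega^\omega$, corresponds to a computable element of $\omega^\omega$. Thus a Weihrauch reduction into a first-order problem automatically produces oracle-computable solutions, while conversely computable truth lets us replace each instance's solution by an index for it, packaged as a first-order problem.

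\textbf{Reverse direction $(\Leftarrow)$.} Suppose $\mathsf{P} \ured \mathsf{Q}$ via Turing functionals $\Phi,\Psi$, with $\mathsf{Q} \in \FPROB$. Fix $f \in \dom(\mathsf{P})$. Then $\Phi(f) \in \dom(\mathsf{Q})$, so $\mathsf{Q}(\Phi(f))$ is a nonempty subset of $\omega$. Pick any $n \in \mathsf{Q}(\Phi(f))$. Under the embedding, $n$ is identified with an element of $\omega^\omega$ that is computable with no oracle. Hence $\Psi(f,n)$ is $f$-computable, and it lies in $\mathsf{P}(f)$ by the reduction. Thus every $\mathsf{P}$-instance has an $f$-computable solution, i.e., $\mathsf{P} \cred \Id$.

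\textbf{Forward direction $(\Rightarrow)$.} Suppose $\mathsf{P}$ is computably true. Define $\mathsf{Q} \in \FPROB$ with $\dom(\mathsf{Q}) = \dom(\mathsf{P})$ by
\[
	\mathsf{Q}(f) = \set{e \in \omega : \Phi_e(f) \in \mathsf{P}(f)},
\]
where the right side carries the convention that $\Phi_e(f) \in \mathsf{P}(f)$ requires $\Phi_e(f)$ to be total. Computable truth of $\mathsf{P}$ ensures each $\mathsf{Q}(f)$ is nonempty. To witness $\mathsf{P} \ured \mathsf{Q}$, take $\Phi$ to be the identity functional and take $\Psi$ to be the universal functional that, on input $\seq{f,e}$ (extracting $e \in \omega$ from the code of the second coordinate), simulates $\Phi_e(f)$. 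By definition of $\mathsf{Q}(f)$, for each $e \in \mathsf{Q}(f)$ the output $\Psi(f,e) = \Phi_e(f)$ is in $\mathsf{P}(f)$.

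\textbf{Main obstacle.} There is essentially no serious obstacle: the argument amounts to recognizing that first-order solutions are oracle-free computable once the embedding $\omega \hookrightarrow \omega^\omega$ is taken into account, and that indices of computable solutions furnish a canonical first-order witness for any computably true problem. The only care needed is in writing $\Psi$ as a genuine Turing functional that reads $e$ off its oracle and then emulates $\Phi_e$, which is routine.
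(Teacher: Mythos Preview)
Your proof is correct and follows essentially the same approach as the paper: the reverse direction is the observation that natural-number solutions are trivially computable, and the forward direction constructs $\mathsf{Q}$ by taking solutions to be indices $e$ with $\Phi_e(f) \in \mathsf{P}(f)$, exactly as you do. The paper merely says the reverse direction is ``clear'' and sketches the forward direction in one sentence, while you spell out both; the content is the same.
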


\begin{proof}
	If $\mathsf{P} \ured \mathsf{Q}$ for some $\mathsf{Q} \in \FPROB$ then it is clear that $\mathsf{P}$ is computably true. In the opposite direction, fix a computably true $\mathsf{P}$. Let $\mathsf{Q}$ be the problem with the same instances as $\mathsf{P}$, with the solutions to an instance $f$ being all $e \in \omega$ such that $\Phi_e(f) \in \mathsf{P}(f)$. Then $\mathsf{Q} \in \FPROB$ and $\mathsf{P} \ured \mathsf{Q}$.
\end{proof}

\section{Defining the first-order part}\label{sec:defining}

Restating the motivation from the preceding section, we would like the first-order part of a problem to correspond to the the strongest first-order problem that Weihrauch reduces to it. The definition we now give does not resemble this, but we will prove that it captures the same idea.

\begin{definition}\label{def:MAIN_def}
	For $\mathsf{P} \in \PROB$, the \emph{first-order part of $\mathsf{P}$}, denoted by $\fopart\mathsf{P}$, is the following first-order problem:
	\begin{itemize}
		\item the $\fopart\mathsf{P}$-instances are all triples $\seq{f,\Phi,\Psi}$, where $f \in \omega^\omega$ and $\Phi$ and $\Psi$ are Turing functionals such that $\Phi(f) \in \dom(\mathsf{P})$ and $\Psi(f,g)(0) \downarrow$ for all $g \in \mathsf{P}(\Phi(f))$;
		\item the $\fopart\mathsf{P}$-solutions to any such $\seq{f,\Phi,\Psi}$ are all $y$ such that $\Psi(f,g)(0) \downarrow = y$ for some $g \in \mathsf{P}(\Phi(f))$.
	\end{itemize}
\end{definition}

We proceed to our main theorem.

\begin{theorem}\label{thm:mainEQUIV}
	Fix $\mathsf{P} \in \mathcal{P}$.
	\begin{enumerate}
		\item For every $\mathsf{Q} \in \FPROB$, if $\mathsf{Q} \ured \mathsf{P}$ then $\mathsf{Q} \sured \fopart \mathsf{P}$.
		\item $\fopart\mathsf{P} \uequiv \max_{\ured} \{ \mathsf{Q} \in \FPROB : \mathsf{Q} \ured \mathsf{P}\}$.
	\end{enumerate}
	%\[
	%	\fopart\mathsf{P} \uequiv \max_{\ured} \{ \mathsf{Q} \in \FPROB : \mathsf{Q} \ured \mathsf{P}\}.
	%\]
	%In fact, for every $\mathsf{Q} \in \FPROB$, if $\mathsf{Q} \ured \mathsf{P}$ then $\mathsf{Q} \sured \fopart \mathsf{P}$. 
\end{theorem}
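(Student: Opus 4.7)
The plan is to derive (2) from (1) together with a direct verification that $\fopart\mathsf{P} \ured \mathsf{P}$; the substantive content lies in (1), and the whole proof boils down to unpacking \Cref{def:MAIN_def} and observing that it is tailor-made for this situation. Indeed, an $\fopart\mathsf{P}$-instance $\seq{f,\Phi,\Psi}$ is literally the data of a putative Weihrauch reduction from some first-order problem to $\mathsf{P}$, and its solutions are exactly the numbers that could be produced by such a reduction.

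For (1), suppose $\mathsf{Q} \in \FPROB$ and $\mathsf{Q} \ured \mathsf{P}$ via fixed Turing functionals $\Phi$ and $\Psi$. I would exhibit a strong Weihrauch reduction $\mathsf{Q} \sured \fopart\mathsf{P}$ whose forward functional sends a $\mathsf{Q}$-instance $f$ to the triple $\seq{f,\Phi,\Psi}$, where $\Phi$ and $\Psi$ are encoded by their indices in our fixed listing of Turing functionals (constants that can be hard-coded into the reduction). To see this is an $\fopart\mathsf{P}$-instance, note that $\Phi(f) \in \dom(\mathsf{P})$ by hypothesis, and for every $g \in \mathsf{P}(\Phi(f))$ we have $\Psi(f,g) \in \mathsf{Q}(f) \subseteq \omega$, so $\Psi(f,g)(0) \downarrow$ under the standard embedding $\omega \hookrightarrow \omega^\omega$. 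The backward functional is simply the identity: any $\fopart\mathsf{P}$-solution $y$ equals $\Psi(f,g)(0)$ for some $g \in \mathsf{P}(\Phi(f))$, and $y$ reinterpreted as an element of $\omega^\omega$ is precisely $\Psi(f,g) \in \mathsf{Q}(f)$. The key observation is that this backward functional depends only on $y$, never on $f$, which is exactly what promotes the reduction from Weihrauch to strong Weihrauch.

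For (2), I first verify $\fopart\mathsf{P} \ured \mathsf{P}$ directly: given an instance $\seq{f,\Phi,\Psi}$, the forward functional uniformly decodes $\Phi$ from the instance and computes $\Phi(f)$; given a $\mathsf{P}$-solution $g$ to $\Phi(f)$, the backward functional, using the original instance, uniformly decodes $\Psi$ and outputs $\Psi(f,g)(0)$, which is a valid $\fopart\mathsf{P}$-solution by definition. This places $\fopart\mathsf{P}$ inside the set $\{\mathsf{Q} \in \FPROB : \mathsf{Q} \ured \mathsf{P}\}$, and by (1) every member of this set satisfies $\mathsf{Q} \sured \fopart\mathsf{P}$ and hence $\mathsf{Q} \ured \fopart\mathsf{P}$, so $\fopart\mathsf{P}$ realizes the $\ured$-maximum of the set up to Weihrauch equivalence. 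I foresee no technical obstacle beyond careful bookkeeping about the embedding $\omega \hookrightarrow \omega^\omega$ and the hard-coding of fixed indices for $\Phi$ and $\Psi$ into the forward functional of the $\sured$-reduction in (1).
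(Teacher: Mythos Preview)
Your proposal is correct and follows essentially the same approach as the paper: for (1) you map a $\mathsf{Q}$-instance $f$ to $\seq{f,\Phi,\Psi}$ and observe that any $\fopart\mathsf{P}$-solution is already a $\mathsf{Q}$-solution (so the backward functional is the identity, independent of $f$); for (2) you show $\fopart\mathsf{P} \ured \mathsf{P}$ by sending $\seq{f,\Phi,\Psi}$ to $\Phi(f)$ and recovering $\Psi(f,g)(0)$, then conclude maximality from (1). This matches the paper's proof in structure and detail.
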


\begin{proof}
	For part (1), suppose $\mathsf{Q} \ured \mathsf{P}$ for some $\mathsf{Q} \in \FPROB$, say via $\Phi$ and $\Psi$. We show that $\mathsf{Q} \sured \fopart\mathsf{P}$. Map a given instance $f$ of $\mathsf{Q}$ to $\seq{f,\Phi,\Psi}$. By assumption, $\Phi(f)$ is an instance of $\mathsf{P}$ and if $g$ is any $\mathsf{P}$-solution to $\Phi(f)$ then $\Psi(f, g)$ is a $\mathsf{Q}$-solution to $f$. As $\mathsf{Q}$ is first-order, this solution is simply $\Psi(f,g)(0)$. In particular, the latter converges, so $\seq{f,\Phi,\Psi}$ is an instance of $\fopart \mathsf{P}$. Now if $y$ is any $\fopart \mathsf{P}$-solution to $\seq{f,\Phi,\Psi}$ then by definition $y = \Psi(f,g)(0)$ for some $\mathsf{P}$-solution $g$ to $\Phi(f)$, so $y$ is also a $\mathsf{Q}$-solution to $f$.

	In light of part (1), to prove part (2) it suffices to show that $\fopart \mathsf{P} \ured \mathsf{P}$. To see this, we map a given $\fopart\mathsf{P}$-instance $\seq{f,\Phi,\Psi}$ to the $\mathsf{P}$-instance $\Phi(f)$. Then, we can map each $\mathsf{P}$-solution $g$ to $\Phi(f)$ to the output of the calculation $\Psi(f, g)(0)$. By definition, the latter is a $\fopart\mathsf{P}$-solution to $\seq{f,\Phi,\Psi}$.
%	Now suppose $\mathsf{Q} \ured \mathsf{P}$ for some $\mathsf{Q} \in \FPROB$, say via $\Phi$ and $\Psi$. We show that $\mathsf{Q} \sured \fopart\mathsf{P}$. First, given an instance $f$ of $\mathsf{Q}$, we map it to $\seq{f,\Phi,\Psi}$. By assumption, if $g$ is any $\mathsf{P}$-solution to $\Phi(f)$ then $\Psi(f, g)$ is a $\mathsf{Q}$-solution to $f$. As $\mathsf{Q}$ is first-order, this solution is simply $\Psi(f,g)(0)$. In particular, the latter converges, so $\seq{f,\Phi,\Psi}$ is an instance of $\fopart \mathsf{P}$. Now if $y$ is any $\fopart \mathsf{P}$-solution to $\seq{f,\Phi,\Psi}$ then by definition $y = \Psi(f,g)(0)$ for some $\mathsf{P}$-solution $g$ to $\Phi(f)$, so $y$ is also a $\mathsf{Q}$-solution to $f$. This completes the proof.
\end{proof}

The following are immediate consequences of \Cref{def:MAIN_def} and the theorem.

\begin{corollary}\label{c:FPROBequalto1}
	If $\mathsf{P}, \mathsf{Q} \in \PROB$ and $\mathsf{Q} \ured \mathsf{P}$ then $\fopart \mathsf{Q} \sured \fopart \mathsf{P}$. In particular, if $\mathsf{Q} \uequiv \mathsf{P}$ then $\fopart \mathsf{Q} \suequiv \fopart \mathsf{P}$.
\end{corollary}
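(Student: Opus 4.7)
The plan is to derive the corollary directly from \Cref{thm:mainEQUIV}, essentially by composing the relevant reductions. The key observation is that $\fopart\mathsf{Q}$ is by definition first-order, so it lives in $\FPROB$ and is therefore eligible to be compared to $\fopart\mathsf{P}$ via part (1) of the theorem, provided only that we can show $\fopart\mathsf{Q} \ured \mathsf{P}$.

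Here is how I would organize the argument. First, assume $\mathsf{Q} \ured \mathsf{P}$. Part of the proof of \Cref{thm:mainEQUIV}(2) already established that $\fopart\mathsf{Q} \ured \mathsf{Q}$ (via the reduction that sends $\seq{f,\Phi,\Psi}$ to $\Phi(f)$ and post-processes a $\mathsf{Q}$-solution $g$ to $\Psi(f,g)(0)$). Combining this with the hypothesis $\mathsf{Q} \ured \mathsf{P}$ and transitivity of $\ured$ yields $\fopart\mathsf{Q} \ured \mathsf{P}$. Since $\fopart\mathsf{Q} \in \FPROB$, \Cref{thm:mainEQUIV}(1) then delivers $\fopart\mathsf{Q} \sured \fopart\mathsf{P}$, which is the first claim.

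For the ``in particular'' statement, suppose $\mathsf{Q} \uequiv \mathsf{P}$, so $\mathsf{Q} \ured \mathsf{P}$ and $\mathsf{P} \ured \mathsf{Q}$. Applying the first part of the corollary in each direction gives $\fopart\mathsf{Q} \sured \fopart\mathsf{P}$ and $\fopart\mathsf{P} \sured \fopart\mathsf{Q}$, hence $\fopart\mathsf{P} \suequiv \fopart\mathsf{Q}$.

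I do not anticipate any real obstacle here: the whole content of the corollary is bookkeeping on top of \Cref{thm:mainEQUIV}, and the only ``moves'' are (i) using that $\fopart\mathsf{Q}$ is first-order by fiat and (ii) transitivity of Weihrauch reducibility. The sole minor subtlety worth spelling out is that even though the hypothesis only gives an ordinary Weihrauch reduction $\mathsf{Q} \ured \mathsf{P}$, we conclude a \emph{strong} Weihrauch reduction between the first-order parts, and this strengthening is exactly what \Cref{thm:mainEQUIV}(1) is designed to supply.
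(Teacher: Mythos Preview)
Your proposal is correct and matches the paper's approach: the paper simply declares the corollary an immediate consequence of \Cref{def:MAIN_def} and \Cref{thm:mainEQUIV}, and your write-up just unpacks that immediacy (using $\fopart\mathsf{Q} \ured \mathsf{Q}$ from part (2), transitivity, and then part (1)). There is nothing to add.
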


%\noindent Another consequence pertains to problems are equivalent to first-order problems. The following again following immediately from out theorem.

\begin{corollary}\label{c:FPROBequalto2}
	If $\mathsf{P} \in \PROB$ and $\mathsf{P} \uequiv \mathsf{Q}$ for some $\mathsf{Q} \in \FPROB$ then $\fopart \mathsf{P} \uequiv \mathsf{P}$.
\end{corollary}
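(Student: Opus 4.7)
The plan is to assemble this from \Cref{thm:mainEQUIV} and \Cref{c:FPROBequalto1} without opening up the definition of $\fopart$ again. The key observation is that any first-order problem equals its own first-order part (up to $\uequiv$), and then Weihrauch equivalence transfers this along.

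First I would verify the special case $\fopart \mathsf{Q} \uequiv \mathsf{Q}$ whenever $\mathsf{Q} \in \FPROB$. The direction $\fopart \mathsf{Q} \ured \mathsf{Q}$ is immediate from the proof of \Cref{thm:mainEQUIV}(2), where this reduction is shown for any problem. For the other direction, note that $\mathsf{Q} \ured \mathsf{Q}$ holds trivially (via identity functionals), so by \Cref{thm:mainEQUIV}(1), applied to $\mathsf{Q}$ as a first-order problem reducible to itself, we obtain $\mathsf{Q} \sured \fopart \mathsf{Q}$, hence in particular $\mathsf{Q} \ured \fopart \mathsf{Q}$.

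Next I would apply \Cref{c:FPROBequalto1}: from $\mathsf{P} \uequiv \mathsf{Q}$ it gives $\fopart \mathsf{P} \suequiv \fopart \mathsf{Q}$, which in particular yields $\fopart \mathsf{P} \uequiv \fopart \mathsf{Q}$. Chaining the equivalences
\[
\fopart \mathsf{P} \; \uequiv \; \fopart \mathsf{Q} \; \uequiv \; \mathsf{Q} \; \uequiv \; \mathsf{P}
\]
then finishes the proof.

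There is no real obstacle here; the statement is a purely formal consequence of the two results immediately preceding it, and the only step that requires momentary thought is the observation that $\fopart \mathsf{Q} \uequiv \mathsf{Q}$ for $\mathsf{Q} \in \FPROB$, which is essentially the content of \Cref{thm:mainEQUIV} specialized to a first-order problem.
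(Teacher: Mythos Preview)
Your proof is correct and matches the paper's approach: the paper states this corollary without proof, as an immediate consequence of \Cref{thm:mainEQUIV} (and \Cref{c:FPROBequalto1}), which is exactly what you unpack. A marginally more direct route avoids \Cref{c:FPROBequalto1} entirely: $\fopart \mathsf{P} \ured \mathsf{P}$ holds in general, and since $\mathsf{Q} \ured \mathsf{P}$ with $\mathsf{Q} \in \FPROB$, \Cref{thm:mainEQUIV}(1) gives $\mathsf{P} \ured \mathsf{Q} \sured \fopart \mathsf{P}$.
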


The behavior of the first-order part under the standard operations on the Weihrauch degrees was studied in detail by Sold\`{a} and Valenti \cite{SV-TA}. In particular, they established the following basic bounds.

\begin{proposition}[Sold\`{a} and Valenti \cite{SV-TA}, Propositions 4.1 and 4.4]\label{SV_character}
	Fix $\mathsf{P},\mathsf{Q} \in \PROB$.
	\begin{enumerate}
		\item $\fopart (\mathsf{P} \sqcup \mathsf{Q}) \uequiv \fopart \mathsf{P} \sqcup \fopart \mathsf{Q}$.
		\item  $\fopart (\mathsf{P} \sqcap \mathsf{Q}) \uequiv \fopart \mathsf{P} \sqcap \fopart \mathsf{Q}$.
		\item $\fopart \mathsf{P} \times \fopart \mathsf{Q} \ured \fopart (\mathsf{P} \times \mathsf{Q})$.
		\item $\fopart \mathsf{P} * \fopart \mathsf{Q} \ured \fopart (\mathsf{P} * \mathsf{Q}) \ured \fopart \mathsf{P} * \mathsf{Q}$.
		\item $\fopart (\mathsf{P}') \sured (\fopart \mathsf{P})'$. 
	\end{enumerate}
	No additional relations hold in general.
\end{proposition}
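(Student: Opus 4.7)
My plan is to verify each clause by direct construction from Definition \ref{def:MAIN_def}, organizing the five parts into ``easy'' inequalities that follow from the universal property of $\fopart$ plus monotonicity of the underlying Weihrauch operations, and ``hard'' inequalities that require direct manipulation of the triples $\seq{f,\Phi,\Psi}$. The claim that ``no additional relations hold in general'' I would defer to the counterexamples in Sold\`{a} and Valenti \cite{SV-TA}, as these pertain to the underlying operations rather than the $\fopart$ construction itself.

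For the easy inequalities: since $\fopart \mathsf{P} \ured \mathsf{P}$ and $\fopart \mathsf{Q} \ured \mathsf{Q}$ by \Cref{thm:mainEQUIV}(2), and each of $\sqcup,\sqcap,\times,*$ is monotone in both arguments, we have $\fopart\mathsf{P} \bullet \fopart\mathsf{Q} \ured \mathsf{P} \bullet \mathsf{Q}$ for each operation $\bullet$. Since $\fopart\mathsf{P} \bullet \fopart\mathsf{Q}$ is Weihrauch-equivalent to a first-order problem (pairs and tagged tuples of natural numbers code to numbers), \Cref{thm:mainEQUIV}(1) then delivers $\fopart\mathsf{P} \bullet \fopart\mathsf{Q} \sured \fopart(\mathsf{P} \bullet \mathsf{Q})$. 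This handles (3), the $\ured$-halves of (1) and (2), and the left half of (4).

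The converse directions of (1), (2), and the right half of (4) I would unpack directly. For (1), given a $\fopart(\mathsf{P} \sqcup \mathsf{Q})$-instance $\seq{f,\Phi,\Psi}$, compute the tag $b = \Phi(f)(0) \in \set{0,1}$ and, via the S-m-n theorem, produce functionals $\Phi_b,\Psi_b$ with $\Phi_b(f)$ extracting the tail of $\Phi(f)$ and $\Psi_b(f,g)(0) = \Psi(f,\seq{b,g})(0)$; then $\seq{b,\seq{f,\Phi_b,\Psi_b}}$ is the desired $\fopart\mathsf{P} \sqcup \fopart\mathsf{Q}$-instance. For (2), given $\seq{f,\Phi,\Psi}$ with $\Phi(f) = \seq{h_1,h_2}$, build a pair of $\fopart$-instances $\seq{f,\Phi_i,\Psi_i}$ that each extract one side and call $\Psi$ with the appropriate tag on the solution. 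For $\fopart(\mathsf{P} * \mathsf{Q}) \ured \fopart\mathsf{P} * \mathsf{Q}$, given $\seq{f,\Phi,\Psi}$ with $\Phi(f) = \seq{g,\Delta}$, produce the $\fopart\mathsf{P} * \mathsf{Q}$-instance $\seq{g,\Delta^*}$, where an S-m-n-constructed $\Delta^*$ maps any $(g,\hat{g})$ with $\hat{g} \in \mathsf{Q}(g)$ to the $\fopart\mathsf{P}$-instance $\seq{\seq{f,g,\hat{g}},\Phi^*,\Psi^*}$ with $\Phi^*(\seq{f,g,\hat{g}}) = \Delta(g,\hat{g})$ and $\Psi^*(\seq{f,g,\hat{g}},\hat{f})(0) = \Psi(f,\seq{\hat{f},\hat{g}})(0)$; a solution $\seq{y,\hat{g}}$ then yields $y$ as a $\fopart(\mathsf{P} * \mathsf{Q})$-solution.

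Part (5) is the most delicate. Given a $\fopart(\mathsf{P}')$-instance $\seq{F,\Phi,\Psi}$ with $\Phi(F) = \seq{f,i}$ and $i$ a $\Delta^{0,f}_2$ index for the $\mathsf{P}$-instance $g$, I would form the $(\fopart\mathsf{P})'$-instance $\seq{F,j}$ where $j$ is a $\Delta^{0,F}_2$ index for the $\fopart\mathsf{P}$-instance $\seq{\seq{F,g},\Phi^*,\Psi^*}$: here $\Phi^*$ is a fixed computable functional extracting $g$ from its input, and $\Psi^*$ is a fixed computable functional satisfying $\Psi^*(\seq{F,g},\hat{g})(0) = \Psi(F,\hat{g})(0)$. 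Since $\Phi^*,\Psi^*$ are computable and $\seq{F,g}$ is $\Delta^{0,F}_2$ (as $f$ is $F$-computable and $g$ is $\Delta^{0,f}_2$), the whole triple admits a $\Delta^{0,F}_2$ index $j$ that is uniformly computable from $\Phi$ and $\Psi$. A $(\fopart\mathsf{P})'$-solution $y$ to $\seq{F,j}$ then equals $\Psi^*(\seq{F,g},\hat{g})(0) = \Psi(F,\hat{g})(0)$ for some $\hat{g} \in \mathsf{P}(g) = \mathsf{P}'(\seq{f,i})$, so $y$ is already a $\fopart(\mathsf{P}')$-solution to the original instance, and the backward map is the identity, giving a strong Weihrauch reduction. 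The main obstacle across these arguments is the S-m-n bookkeeping and, for (5), the uniform verification that the constructed $j$ is genuinely a $\Delta^{0,F}_2$ index; but both are routine once the packaging of data into $\seq{F,g}$ is made to absorb the external parameters.
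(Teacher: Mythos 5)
The paper does not actually prove \Cref{SV_character}; it is cited verbatim from Sold\`{a} and Valenti's paper (their Propositions 4.1 and 4.4), so there is no in-paper argument to compare against. Evaluated on its own merits, your proposal is essentially correct and follows the natural route one would expect: use the maximality property from \Cref{thm:mainEQUIV} plus monotonicity of the underlying operations for the ``cheap'' inequalities, and explicit manipulation of the triples $\seq{f,\Phi,\Psi}$ for the others. A few points are worth tightening.

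First, for the ``easy'' halves you invoke \Cref{thm:mainEQUIV}(1) after asserting that $\fopart\mathsf{P}\bullet\fopart\mathsf{Q}$ is Weihrauch-equivalent to a first-order problem. That theorem is stated for $\mathsf{Q}\in\FPROB$ literally, not up to $\uequiv$, so you should either pass to an explicitly first-order recoding $\mathsf{R}\suequiv\fopart\mathsf{P}\bullet\fopart\mathsf{Q}$ (this is easy: the solutions of $\fopart\mathsf{P}\bullet\fopart\mathsf{Q}$ are tagged pairs of integers, and the recoding is a fixed bijection) and then chain $\fopart\mathsf{P}\bullet\fopart\mathsf{Q}\sured\mathsf{R}\sured\fopart(\mathsf{P}\bullet\mathsf{Q})$, or note explicitly that the statement only claims $\ured$ in parts (1)--(4), for which a mere $\uequiv$ to a first-order problem suffices. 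Second, in (5) and the right half of (4) you posit ``fixed'' $\Phi^*,\Psi^*$ that secretly depend on $\Psi$ (and, in (4), on $\Delta$); as you acknowledge, this is just S-m-n bookkeeping, but for a clean write-up one should either construct $\Phi^*,\Psi^*$ by S-m-n from the indices read off the outer instance, or enlarge the packaged datum (e.g., $\seq{F,\Psi,g}$ rather than $\seq{F,g}$) so that $\Phi^*,\Psi^*$ really are absolute constants. Third, part (5) requires a genuine $\sured$; your construction delivers this because the backward map is the identity, which is the right observation. Finally, deferring the sharpness claim (``no additional relations hold'') to the counterexamples in Sold\`{a}--Valenti is appropriate, as this paper does likewise.
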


Let us now pass to one specific example. Recall that for $k \geq 1$, $\RT^1_k$ denotes the problem whose instances are all functions $c : \omega \to k$ (called \emph{$k$-colorings} or just \emph{colorings}), with the solutions to any such $c$ being all its infinite \emph{monochromatic} sets, i.e., infinite sets $H \subseteq \omega$ on which $c$ is constant. (This is \emph{Ramsey's theorem} for $k$-colorings of singletons. We will discuss Ramsey's theorem in more generality in \Cref{sec:FOPARTS}.) There is a variant of this problem denoted $\mathsf{BWT}_k$, introduced by Brattka, Gherardi, and Marcone \cite{BGM-2012}. 
%by $\rt^1_k$, which was studied by Dzhafarov, Goh, Hirschfeldt, and Patey \cite[Section 5]{DGHPP-2020} and (under the name $\mathsf{CRT}^1_k$) by Brattka and Rakotoniaina \cite{BR-2017}.
This has the same instances as those of $\RT^1_k$, but the solutions to any $c : \omega \to k$ are all $i < k$ such that $c(x) = i$ for infinitely many $x$. Now, even though $\mathsf{BWT}_k$ is first-order and $\RT^1_k$ is not, it is easy to see that $\RT^1_k \uequiv \mathsf{BWT}_k$. (See, e.g., \cite{BR-2017}, Proposition 3.4, for a proof.)  Hence, by \Cref{c:FPROBequalto1,c:FPROBequalto2} we have that $\fopart \RT^1_k \suequiv \fopart \mathsf{BWT}_k$ and $\fopart \RT^1_k \uequiv \RT^1_k$.
%(Note, however, that $\RT^1_k \nsured \rt^1_k$ and $\rt^1_k \nsured \RT^1_k$.)

We can characterize the first-order part of $\RT^1_k$ in terms of more basic problems and operations from computable analysis. (This is our first example of such a characterization, but we will see others in the next two sections.) The following well-known result is due to Brattka, Gherardi, and Marcone \cite{BGM-2012}. For completeness, we include a proof here, which is also a bit more direct.

\begin{proposition}[Brattka, Gherardi, and Marcone \cite{BGM-2012}, Corollary 11.11]\label{fopart_rt1}
	For all $k \geq 1$, $\mathsf{BWT}_k \suequiv \mathsf{C}'_k$.
\end{proposition}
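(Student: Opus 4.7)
My plan is to prove both strong reductions directly.

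For $\mathsf{BWT}_k \sured \C'_k$: given $c : \omega \to k$, I would construct a $\C_k$-instance $v$ whose non-enumerated elements are exactly the colors appearing infinitely often in $c$. For each $i < k$, let $t_i$ denote the last position at which $c$ takes value $i$, with $t_i = -1$ if $i$ never appears and $t_i = \infty$ if $i$ appears infinitely often. Set $T = \{i < k : t_i < \infty\}$ and list $T$ as $a_0, \ldots, a_{|T|-1}$ in increasing order of $t_{a_j}$, with ties broken numerically. Define $v(s)$ to be $a_j$ for the largest $j$ with $t_{a_j} < s$, or $k$ if no such $j$ exists. By pigeonhole $T \subsetneq k$, and one checks that $v$ satisfies the format conditions of a $\C_k$-instance (contiguous blocks, no return to $k$ once left). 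The non-enumerated elements of $v$ are precisely $k \setminus T$, the set of infinitely-often colors of $c$. Since ``$t_i < s$'' is uniformly $\Pi^{0,c}_1$ in $i$ and $s$, the function $v$ is $c$-computable in the limit, and a $\Delta^{0,c}_2$-index $e$ for $v$ can be produced computably and uniformly from $c$. The $\C'_k$-instance is $\seq{c, e}$, with backward map the identity.

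For $\C'_k \sured \mathsf{BWT}_k$: given $\seq{f, e}$ where $e$ codes a computable-in-$f$ approximation $h$ with $\lim_t h(s, t) = v(s)$ for some $\C_k$-instance $v$, I would define $E_s = \{h(s', s) : s' \leq s\} \cap k$ and set $c(s) = \min(k \setminus E_s)$, or $c(s) = 0$ if $E_s = k$. Then $c : \omega \to k$ is computable from $\seq{f, e}$, and some $j < k$ appears infinitely often in $c$ by pigeonhole. I would argue that every such $j$ is a $\C_k$-solution to $v$: if $j$ were enumerated, say $j = v(s_0)$, then for all sufficiently large $s$ we would have $h(s_0, s) = j$ with $s_0 \leq s$, so $j \in E_s$ and $c(s) \neq j$, contradicting that $j$ appears infinitely often. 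Thus every $\mathsf{BWT}_k$-solution is a $\C_k$-solution to $v$ (hence a $\C'_k$-solution to $\seq{f, e}$), returned unchanged.

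The main subtlety lies in the first direction: arranging $v$ to respect the block-monotonic format of $\C_k$-instances requires specifically ordering $T$ by $t_{a_j}$ and defining block boundaries as above, so that the resulting function is piecewise constant in non-decreasing blocks and never re-emits $k$ once it has been left. Once this is in place, the $\Delta^{0,c}_2$-index can be produced uniformly in $c$, both arguments reduce to elementary verifications about pointwise limits, and both backward maps are identities, so the reductions are indeed strong.
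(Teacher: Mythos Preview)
Both directions have gaps, one minor and one more substantial.

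For $\mathsf{BWT}_k \sured \C'_k$: your $v$ can fail to enumerate all of $T$ when ties occur at $t_i = -1$. If two or more colors never appear in $c$ (possible once $k \geq 3$), each has $t$-value $-1$, and your rule ``output $a_j$ for the largest $j$ with $t_{a_j} < s$'' skips all but the last of them. With $k = 3$ and $c \equiv 2$ you get $T = \{0,1\}$, $t_0 = t_1 = -1$, and $v \equiv 1$; then $0$ is a $\C_3$-solution to $v$ but not a $\mathsf{BWT}_3$-solution to $c$, so the identity backward map fails. This is easily patched (e.g., use $t_{a_j} + j$ in place of $t_{a_j}$), and the paper's construction---at each stage output the least new $i$ with $(\forall x > s)[c(x) \neq i]$, else repeat the previous value---sidesteps the issue entirely.

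For $\C'_k \sured \mathsf{BWT}_k$: your correctness argument breaks in the fallback case $E_s = k$, where you set $c(s) = 0$ regardless of whether $0 \in E_s$. Since the $\C'_k$-instance includes the $\Delta^{0,f}_2$ index, the approximation $h$ may be adversarial. Take a computable $v$ enumerating $\{0,\ldots,k-2\}$ and set $h(s,s) = k-1$ while $h(s,t) = v(s)$ for $t \neq s$; this is a legitimate approximation, and for all sufficiently large $s$ we get $E_s = k$ (each enumerated color eventually enters $E_s$ via its true stage, and $k-1$ enters via the diagonal), so $c(s) = 0$ cofinitely---yet $0$ is not a $\C_k$-solution. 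The difficulty is structural: $E_s$ can persistently overshoot the true enumerated set, because although $h(s',t) = k-1$ only finitely often for each fixed $s'$, this can still happen for every $s$ along the moving diagonal. The paper avoids this by working directly with a $\Pi^{0,f}_2$ description $(\forall u)(\exists w)R(i,u,w)$ of the non-enumerated set and defining $c(x)$ as the least $i$ witnessed at the minimal bound $y$ with $(\forall u < x)(\exists w < y)R(i,u,w)$; then $c(x) = i$ for infinitely many $x$ immediately yields $(\forall u)(\exists w)R(i,u,w)$, with no fallback case needed.
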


\begin{proof}
	First, fix an instance of $\C_k'$. Regard this as an $f \in \omega^\omega$ and a $\Delta^{0,f}_2$ index for an $f'$-computable instance $v$ of $\C_k$. The nonempty set $S = \set{i < k : (\forall s)[ v(s) \neq i]}$ is then uniformly $\Pi^0_2$ in $f$. Let $R$ be an $f$-computable predicate so that $i \in S$ if and only if $(\forall u)(\exists v)R(i,u,v)$. Define an $f$-computable coloring $c : \omega \to k$ as follows. Given $x \in \omega$, search for the least $y$ such that $(\exists i < k)(\forall u < x)(\exists v < y)R(i,u,v)$, and let $c(x)$ be the least witness $i$ for this $y$. Now suppose $i < k$ is a $\mathsf{BWT}_k$-solution to $c$. Then in particular there are infinitely many $x$ such that $(\forall u < x)(\exists v)R(i,u,v)$ and so $i \in S$. It follows that $i$ is a $\C_2$-solution to $v$, as wanted.
	
	Conversely, fix an instance $c$ of $\mathsf{BWT}_k$. Define $v : \omega \to k+1$ as follows: let $v(0) = k$, and for all $s > 0$, let $v(s) = i$ for the least $i < k$ such that $(\forall x > s)[c(x) \neq i]$ and $v(t) \neq i$ for any $t < s$, or $v(s-1)$ if no such $i$ exists. Then $v$ is a uniformly $c'$-computable instance of $\C_k$, so we can regard $c$ together with a $\Delta^{0,c}_2$ index for $v$ as an instance of $\C_k'$. Any $\C_k$-solution $i$ to $v$ is a $\mathsf{BWT}_k$-solution to $c$.
\end{proof}

\begin{corollary}
	For all $k \geq 1$, $\fopart \RT^1_k \suequiv \fopart \mathsf{BWT}_k \suequiv \C_k'$.	
\end{corollary}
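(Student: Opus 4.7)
The plan is to chain together three previously established results. First, since $\RT^1_k \uequiv \mathsf{BWT}_k$ (as noted in the discussion preceding Proposition \ref{fopart_rt1}), Corollary \ref{c:FPROBequalto1} immediately yields the first strong equivalence $\fopart \RT^1_k \suequiv \fopart \mathsf{BWT}_k$.

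For the second equivalence $\fopart \mathsf{BWT}_k \suequiv \C_k'$, I would observe that $\mathsf{BWT}_k$ is itself first-order (its solutions are elements of $k$, hence of $\omega$), so Corollary \ref{c:FPROBequalto2} gives $\fopart \mathsf{BWT}_k \uequiv \mathsf{BWT}_k$, and Proposition \ref{fopart_rt1} gives $\mathsf{BWT}_k \suequiv \C_k'$. Composing these yields $\fopart \mathsf{BWT}_k \suequiv \C_k'$, finishing the corollary.

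The main point requiring attention is promoting the middle step to a strong equivalence, since Corollary \ref{c:FPROBequalto2} is stated only with $\uequiv$. One direction, $\C_k' \sured \fopart \mathsf{BWT}_k$, is straightforward: compose the strong reduction $\C_k' \sured \mathsf{BWT}_k$ from Proposition \ref{fopart_rt1} with $\mathsf{BWT}_k \sured \fopart \mathsf{BWT}_k$ from Theorem \ref{thm:mainEQUIV}(1), which applies since $\mathsf{BWT}_k \in \FPROB$ and $\mathsf{BWT}_k \ured \mathsf{BWT}_k$ trivially. The reverse direction $\fopart \mathsf{BWT}_k \sured \C_k'$ is the anticipated obstacle, since the natural witness to $\fopart \mathsf{BWT}_k \ured \mathsf{BWT}_k$ from Theorem \ref{thm:mainEQUIV}(2) uses the input $f$ in its backward functional. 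My approach to handle this is to apply Theorem \ref{thm:mainEQUIV}(1) with $\mathsf{Q} = \fopart \mathsf{BWT}_k \in \FPROB$ and $\mathsf{P} = \C_k'$, using the composite weak reduction $\fopart \mathsf{BWT}_k \ured \mathsf{BWT}_k \uequiv \C_k'$, to obtain $\fopart \mathsf{BWT}_k \sured \fopart \C_k'$, and then exploit the identity-backward-map structure of the strong reductions in Proposition \ref{fopart_rt1} (both directions of $\mathsf{BWT}_k \suequiv \C_k'$ are witnessed by the identity on $k$) to close the loop and recover the strong reduction to $\C_k'$ itself.
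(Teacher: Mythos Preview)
You correctly isolate $\fopart \mathsf{BWT}_k \sured \C_k'$ as the nontrivial direction, but your proposed fix does not close the gap, and in fact it cannot: for $k \geq 2$ this strong reduction is false. If it held, composing with $\C_k' \sured \mathsf{BWT}_k$ from Proposition~\ref{fopart_rt1} would yield $\fopart \mathsf{BWT}_k \sured \mathsf{BWT}_k$, contradicting the very next proposition in the paper. Concretely, a $\C_k'$-solution is an element of $k$, whereas a $\fopart \mathsf{BWT}_k$-solution (or a $\fopart \C_k'$-solution) can be an arbitrary natural number; a backward functional with no access to the instance sees only a number below $k$ and so can produce at most $k$ distinct outputs. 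Your ``identity-backward-map'' maneuver does get you to $\fopart \mathsf{BWT}_k \suequiv \fopart \C_k'$ (this is just Corollary~\ref{c:FPROBequalto1} again, applied to $\mathsf{BWT}_k \uequiv \C_k'$), but the final step from $\fopart \C_k'$ down to $\C_k'$ under $\sured$ fails for exactly the reason above.

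The upshot is that the second $\suequiv$ in the corollary as printed is a slip. What the surrounding discussion actually establishes---and what your first two paragraphs already prove cleanly---is $\fopart \RT^1_k \suequiv \fopart \mathsf{BWT}_k$ together with $\fopart \mathsf{BWT}_k \uequiv \C_k'$. If one wants strong equivalence throughout, the corrected right-hand side is $\fopart \C_k'$ rather than $\C_k'$, and then everything follows immediately from Corollary~\ref{c:FPROBequalto1}. The paper itself flags the general failure of $\fopart \mathsf{P} \sured \mathsf{P}$ for $\mathsf{P} \in \FPROB$ in the remark immediately following this corollary.
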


\noindent For $k \geq 2$, one takeaway here is that while $\RT^1_k$ has trivial first-order part as a $\Pi^1_2$ statement of second-order arithmetic, its first-order part in the present setting is non-trivial. Of course, $\RT^1_k$ is \emph{itself} trivial as a $\Pi^1_2$ statement, and nontrivial as a problem, so this is not so surprising. In the next two sections, we will see some more interesting examples of this phenomenon.

A further insight we can obtain from $\RT^1_k$ is that, in general, it is false that $\fopart \mathsf{P} \sured \mathsf{P}$, even for $\mathsf{P} \in \FPROB$. (Thus, \Cref{c:FPROBequalto2} cannot be improved to $\suequiv$, even if $\mathsf{P} \suequiv \mathsf{Q}$ there.)

\begin{proposition}
	For all $k \geq 2$, $\fopart \mathsf{BWT}_k \nsured \mathsf{BWT}_k$ and $\fopart \mathsf{BWT}_k \nsured \mathsf{RT}^1_k$.
\end{proposition}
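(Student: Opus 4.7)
The strategy is to exploit the fact that in a $\fopart \mathsf{BWT}_k$-instance $\seq{f, \Phi, \Psi}$, the $\Psi$-component can turn the inner $\mathsf{BWT}_k$-solution into an arbitrary natural number depending on $f$, while in a strong Weihrauch reduction the external backward functional sees only the inner problem's solution and has no access to the original instance. Concretely, fix Turing functionals $\Phi_0, \Psi_0$ with $\Phi_0(f) = 0^\omega$ (the constant-zero $k$-coloring) and $\Psi_0(f, g)(0) = f(0)$. Since $\mathsf{BWT}_k(0^\omega) = \set{0}$ is nonempty, $\seq{f, \Phi_0, \Psi_0}$ is a valid $\fopart \mathsf{BWT}_k$-instance for every $f \in \omega^\omega$, with unique solution $f(0)$, a quantity we may choose at will.

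For the first non-reduction, assume $\fopart \mathsf{BWT}_k \sured \mathsf{BWT}_k$ via Turing functionals $\Phi, \Psi$, and set $c_f = \Phi(\seq{f, \Phi_0, \Psi_0})$. For each $j < k$, let $\hat g_j \in \omega^\omega$ be the element with $\hat g_j(0) = j$ and $\hat g_j(n) = 0$ for $n \geq 1$, which encodes $j$ under the paper's embedding convention. Since $c_f \colon \omega \to k$, some color appears infinitely often, so at least one $\hat g_j$ lies in $\mathsf{BWT}_k(c_f)$, and for that $j$ we must have $\Psi(\hat g_j)(0) = f(0)$. But the set $T = \set{\Psi(\hat g_j)(0) : j < k, \, \Psi(\hat g_j)(0) \downarrow}$ contains at most $k$ elements, so choosing $f$ with $f(0) \notin T$ yields a contradiction.

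For the second non-reduction, assume $\fopart \mathsf{BWT}_k \sured \RT^1_k$ via Turing functionals $\Phi, \Psi$, and pick $f_1, f_2 \in \omega^\omega$ with $f_1(0) \neq f_2(0)$. Let $c_i = \Phi(\seq{f_i, \Phi_0, \Psi_0})$. The joint partition of $\omega$ via $x \mapsto (c_1(x), c_2(x))$ has at most $k^2$ classes, so pigeonhole yields an infinite class $H^* \subseteq \omega$ that is simultaneously homogeneous for $c_1$ and $c_2$, hence a valid $\RT^1_k$-solution to both. Then $\Psi(H^*)$ must encode both $f_1(0)$ and $f_2(0)$, a contradiction. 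The only real conceptual hurdle in either part is recognizing that $\fopart \mathsf{BWT}_k$-solutions need not lie in $\set{0, \ldots, k-1}$ despite the Weihrauch-equivalence $\fopart \mathsf{BWT}_k \uequiv \mathsf{BWT}_k$; once that is seen, both claims reduce to simple pigeonhole.
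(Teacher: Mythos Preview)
Your proof is correct and rests on the same core observation as the paper's: a $\fopart \mathsf{BWT}_k$-instance can be engineered to have a unique solution that ranges over all of $\omega$, whereas the backward functional in a strong reduction sees only the inner solution, which lives in a fixed finite set.

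The technical packaging differs. The paper takes $\Phi$ to be the identity and sets $\Psi(f,i)(0)$ equal to the least $x$ with $f(x)=i$; it then feeds in the $k+1$ colorings $c_x = 0^x 1^\omega$ and applies pigeonhole to find two whose images under the forward functional share a $\mathsf{BWT}_k$-solution. You instead collapse the inner instance to the constant coloring $0^\omega$ and let $\Psi_0$ read off $f(0)$, so the full range of possible outputs is visible immediately; the contradiction then comes from noting that $\Psi$ restricted to the $k$ possible encoded colors can produce at most $k$ values. Your route avoids the pigeonhole step over instances and is a little more direct. For the $\RT^1_k$ half, the paper only remarks that a ``similar (but simpler) argument'' works; your common-homogeneous-set trick via the product coloring is exactly the natural way to fill that in.
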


\begin{proof}
	Fix $k \geq 2$. To show that $\fopart \mathsf{BWT}_k \nsured \mathsf{BWT}_k$, let $\Phi$ be the identity functional on $\omega^\omega$. Let $\Psi$ be a functional such that $\Psi(f,i)(0)$ is the least $x \in \omega$ such that $f(x) = i$, for all $f \in \omega^\omega$ and $i \in \omega$. So if $c$ is an instance of $\mathsf{BWT}_k$ and $i$ is a solution to $c$ then $\Psi(c,i)(0) \downarrow$. It follows that $\seq{c,\Phi,\Psi}$ is an instance of $\fopart \mathsf{BWT}_k$ for every such $c$. Now, suppose towards a contradiction that $\fopart \mathsf{BWT}_k \sured \mathsf{BWT}_k$, say via $\widehat{\Phi}$ and $\widehat{\Psi}$. For each $x \in \omega$, let $c_x = 0^x 1^\omega$, viewed as a coloring $\omega \to k$. Then for each $x \leq k$, we have by assumption that $\widehat{\Phi}(c_x)$ is a $\mathsf{BWT}_k$-instance. There must then exist $x_0 < x_1 \leq k$ and $i < k$ such that $\widehat{\Phi}(c_{x_0})$ and $\widehat{\Phi}(c_{x_1})$ each have $i$ as a $\mathsf{BWT}_k$-solution. Hence, $\widehat{\Psi}(i)$ must be a $\fopart \mathsf{BWT}_k$-solution to both $\seq{c_{x_0},\Phi,\Psi}$ and $\seq{c_{x_1},\Phi,\Psi}$. But since each of $c_{x_0}$ and $c_{x_1}$ has a unique $\mathsf{BWT}_k$-solution of $1$, it follows that $\seq{c_{x_0},\Phi,\Psi}$ and $\seq{c_{x_1},\Phi,\Psi}$ have unique $\fopart \mathsf{BWT}_k$-solutions $x_0$ and $x_1$, respectively, and $x_0 \neq x_1$. A similar (but simpler) argument shows that $\fopart \mathsf{BWT}_k \nsured \mathsf{RT}^1_k$.
\end{proof}

\noindent Note that we do need both parts above because for $k \geq 2$, $\mathsf{BWT}_k$ and $\RT^1_k$ are incomparable under strong Weihrauch reducibility. We are not aware of any explicit proof of this in the literature, but it is straightforward and routine. Trivially, $\fopart \mathsf{BWT}_1 \suequiv \mathsf{BWT}_1 \suequiv 
\RT^1_1$.

\section{Uniform computable solvability and undiagonalizability}\label{sec:UNIF}

In this section, we explore a bit more the notion of being uniformly computable true (i.e., trivial under Weihrauch reducibility) and how it interacts with the first-order part of a problem. To begin, we connect this notion with the following one, which was introduced by Hirschfeldt and Jockusch \cite{HJ-2016} in an unrelated context.

\begin{definition}[Hirschfeldt and Jockusch \cite{HJ-2016}, Definition 4.11]
	A problem $\mathsf{P}$ is \emph{undiagonalizable} if for every $\mathsf{P}$-instance $f$, the set of $\sigma \in \omega^{<\omega}$ that can be extended to a $\mathsf{P}$-solution $g \in \omega^\omega$ to $f$ is uniformly $\Delta^0_1$ in $f$ (i.e., there is a Turing functional $\Gamma$ such that $\Gamma(f)(\sigma) \downarrow = i \in \{0,1\}$ for all $\mathsf{P}$-instances $f$ and all $\sigma \in \omega^{<\omega}$, with $i = 1$ if and only if $\sigma$ is an initial segment of $\mathsf{P}$-solution to $f$.)
\end{definition}

\noindent Notice that any problem can be made undiagonalizable without changing the Turing degrees of its solutions, simply by replacing each solution by all finite modifications of it. In particular, there are many examples of such problems that are not themselves uniformly computably true, even non-uniformly so. This makes the next result striking.

\begin{proposition}\label{P:undiag-to-trivial}
	Let $\mathsf{P} \in \PROB$ be undiagonalizable. Then $\fopart\mathsf{P}$ is uniformly computably true.
\end{proposition}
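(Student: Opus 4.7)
My plan is to build, from the functional $\Gamma_0$ witnessing the undiagonalizability of $\mathsf{P}$, a single Turing functional $\Gamma$ that solves every $\fopart\mathsf{P}$-instance. The key idea is that undiagonalizability lets us effectively identify ``live'' initial segments of $\mathsf{P}$-solutions, and by continuity of $\Psi$ some such segment is already long enough to pin down $\Psi(f,g)(0)$.

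More concretely, given an instance $\seq{f,\Phi,\Psi}$ of $\fopart\mathsf{P}$, first uniformly compute $h = \Phi(f) \in \dom(\mathsf{P})$. Then search through $\sigma \in \omega^{<\omega}$ in some fixed effective enumeration, looking for a $\sigma$ such that both $\Gamma_0(h)(\sigma)\!\downarrow = 1$ (so $\sigma$ is an initial segment of some $\mathsf{P}$-solution to $h$) and $\Psi(f,\sigma)(0)\!\downarrow$ in the sense of the convention on Turing functionals applied to strings. As soon as such a $\sigma$ is found, output $\Psi(f,\sigma)(0)$.

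Two things need to be checked. First, the search always succeeds: since $\seq{f,\Phi,\Psi}$ is a valid instance, there exists some $g \in \mathsf{P}(h)$, and for that $g$ we have $\Psi(f,g)(0)\!\downarrow$, hence $\Psi(f,g\!\res\!k)(0)\!\downarrow$ for some $k$ by our continuity convention; the string $\sigma = g\!\res\!k$ then satisfies both conditions, because $\sigma$ extends to the $\mathsf{P}$-solution $g$ and so $\Gamma_0(h)(\sigma) = 1$. Second, the output is correct: if $\sigma$ satisfies $\Gamma_0(h)(\sigma) = 1$, then by definition of undiagonalizability some $g \in \mathsf{P}(h)$ extends $\sigma$, and for this $g$ the convention on $\Psi(f,\sigma)(0)\!\downarrow$ forces $\Psi(f,g)(0) = \Psi(f,\sigma)(0) = y$; thus $y$ is a genuine $\fopart\mathsf{P}$-solution.

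Since the whole procedure used only $f$, $\Phi$, $\Psi$, and the fixed functional $\Gamma_0$, it is uniform in the instance, giving the required functional $\Gamma$. I don't expect a substantive obstacle here: the proof is essentially one short effective search, and the only real care needed is to line up the convention ``$\Psi(f,\sigma)(0)\!\downarrow$'' with the continuity fact that any convergent use has a finite initial-segment witness, so that the search is guaranteed to terminate.
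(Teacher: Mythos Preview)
Your proof is correct and follows essentially the same approach as the paper's own proof: use the undiagonalizability witness to search for a finite $\sigma$ extendible to a $\mathsf{P}$-solution on which $\Psi(f,\sigma)(0)$ already converges, and output that value. Your write-up is in fact a bit more explicit than the paper's in separating the termination and correctness arguments, but there is no substantive difference.
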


\begin{proof}
	Fix a functional $\Gamma$ witnessing that $\mathsf{P}$ is undiagonalizable. Given any instance $\seq{f,\Phi,\Psi}$ of $\fopart \mathsf{P}$, search for the first $\sigma \in \omega^{<\omega}$ such that $\Gamma(\Phi(f))(\sigma) \downarrow = 1$ and $\Psi(f,\sigma)(0) \downarrow$. (The search must succeed since any sufficiently long initial segment of any $\mathsf{P}$-solution to $\Phi(f)$ can serve as $\sigma$.) The value of $\Psi(f,\sigma)(0) \downarrow$ is then a $\fopart\mathsf{P}$-solution to $\seq{f,\Phi,\Psi}$.
\end{proof}

\noindent As we will see, being undigonalizable somewhat better captures the idea of having trivial first-order part than simply having the first-order part be uniformly computably true.

The converse of \Cref{P:undiag-to-trivial} is false. In fact, being uniformly computably true does not even imply being Weihrauch reducible to an undiagonalizable problem. To see this, consider the \emph{thin set principle} for $3$-colorings of singletons, $\TS^1_3$. Its instances are all colorings $c : \omega \to 3$, and the solutions to any such $c$ are all its infinite \emph{thin} sets, i.e., infinite sets $T \subseteq \omega$ such that $|c(T)| \leq 2$. Hirschfeldt and Jockusch \cite[p.~39]{HJ-2016} point out that $\TS^1_3$ has what they call \emph{diagonalization opportunities} (see \cite[Definition 4.12]{HJ-2016}) and they show that no problem that has diagonalization opportunities is Weihrauch reducible to any undiagonalizable problem (\cite[Theorem 4.13]{HJ-2016}). But $\fopart \TS^1_3$ is uniformly computably true. Indeed, given an instance $\seq{f,\Phi,\Psi}$ of $\fopart \TS^1_3$ with $\Phi(f) = c : \omega \to 3$, we can search for $\sigma \in 2^{<\omega}$ such that $c \res \{x < |\sigma| : \sigma(x) = 1\}$ is constant and $\Psi(f,\sigma)(0) \downarrow$. Any monochromatic set for $c$ is also thin for $c$, and any finite $c$-homogeneous set is extendible to an infinite $c$-thin one. Thus, the search must succeed and $\Psi(f,\sigma)(0)$ must be a $\fopart \TS^1_3$-solution to $\seq{f,\Phi,\Psi}$.

We have the following immediate consequence of \Cref{P:undiag-to-trivial} and \Cref{c:FPROBequalto2}.

%which tells us that in \Cref{P:comptruechar}, we cannot replace $\ured$ by $\uequiv$.

\begin{corollary}
	If $\mathsf{P} \in \PROB$ is undiagonalizable but not uniformly computably true then no $\mathsf{Q} \in \FPROB$ satisfies $\mathsf{Q} \uequiv \mathsf{P}$.
\end{corollary}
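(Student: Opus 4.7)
The plan is a direct contradiction argument that simply chains \Cref{P:undiag-to-trivial} and \Cref{c:FPROBequalto2}. Suppose, toward a contradiction, that $\mathsf{P}$ is undiagonalizable and not uniformly computably true, yet $\mathsf{P} \uequiv \mathsf{Q}$ for some $\mathsf{Q} \in \FPROB$. First I would apply \Cref{c:FPROBequalto2} to the equivalence $\mathsf{P} \uequiv \mathsf{Q}$ to conclude that $\fopart \mathsf{P} \uequiv \mathsf{P}$.

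Next I would invoke \Cref{P:undiag-to-trivial} on the undiagonalizable $\mathsf{P}$ to conclude that $\fopart \mathsf{P}$ is uniformly computably true, i.e., $\fopart \mathsf{P} \ured \Id$. Combining this with $\mathsf{P} \ured \fopart \mathsf{P}$ from the previous paragraph, transitivity of Weihrauch reducibility yields $\mathsf{P} \ured \Id$, which contradicts the hypothesis that $\mathsf{P}$ is not uniformly computably true.

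I do not anticipate any obstacle here: the corollary is a one-line consequence of the two results immediately preceding it, with all of the substantive work already done in the proof of \Cref{P:undiag-to-trivial}. The only micro-step worth noting, which is immediate, is that being uniformly computably true is closed downward under $\ured$, so the equivalence $\mathsf{P} \uequiv \fopart \mathsf{P}$ is enough to transfer uniform computable truth from $\fopart \mathsf{P}$ to $\mathsf{P}$.
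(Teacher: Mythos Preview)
Your proposal is correct and matches the paper's approach exactly: the paper presents this corollary without a separate proof, stating only that it is an immediate consequence of \Cref{P:undiag-to-trivial} and \Cref{c:FPROBequalto2}, which is precisely the chain you spell out.
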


\noindent This has an interesting application in that we can use it to see that in \Cref{P:comptruechar}, we cannot in general replace $\ured$ by $\uequiv$. Indeed, consider any undiagonalizable problem that is computably true but not uniformly computably true. (For example, this can be the problem ${}^{\text{FE}}\RT^1_2$, introduced by Dzhafarov, Goh, Hirschfeldt, Patey, and Pauly  \cite{DGHPP-2020}, Definition 1.7. This is just $\RT^1_2$, but with solutions replaced by all infinite sets that are homogeneous modulo finitely many errors.) By \Cref{P:comptruechar}, there is a $\mathsf{Q} \in \FPROB$ such that $\mathsf{P} \ured \mathsf{Q}$, but by the preceding corollary, no such $\mathsf{Q}$ satisfies $\mathsf{P} \uequiv \mathsf{Q}$.

%By taking a computably true problem that is not uniformly computably true (e.g., $\mathsf{BWT}_k$),

Let us next look at some better-known examples of undiagonalizable problems. We recall some definitions.

\begin{definition}
	\
	\begin{enumerate}
		\item A set $X \subseteq \omega$ is \emph{cohesive} for a family $\seq{A_i : i \in \omega}$ of subsets of $\omega$ if for all $i$, either $X \cap A_i$ or $X \cap \overline{A}_i$ is finite.
		\item A family of sets $\seq{B_i : i \in \omega}$ is a \emph{subfamily} of a family of sets $\seq{A_i : i \in \omega}$ if $(\forall i)(\exists j)[B_i = A_j]$. We write $\seq{B_i : i \in \omega} \subseteq \seq{A_i : i \in \omega}$.
		\item A family of sets $\seq{A_i : i \in \omega}$ has the \emph{finite intersection property} if $\bigcap_{i \in F} A_i \neq \emptyset$ for every nonempty finite set $F$.
	\end{enumerate}
\end{definition}

\noindent The following problems come from the reverse mathematics literature, but have also been studied to a lesser extend in the context of Weihrauch reducibility.
%are the \emph{cohesiveness principle}, $\COH$, and the \emph{finite intersection principle}, $\mathsf{FIP}$.

\begin{definition}
	\
	\begin{enumerate}
		%\item A set $X$ is \emph{cohesive} for a family of sets $\seq{A_i : i \in \omega}$ if for all $i$, either $X \cap A_i$ or $X \cap \overline{A}_i$ is finite.
		%\item A family of sets $\seq{B_i : i \in \omega}$ is a \emph{subfamily} of a family of sets $\seq{A_i : i \in \omega}$ if $(\forall i)(\exists j)[B_i = A_j]$.
		%\item A family of sets $\seq{A_i : i \in \omega}$ has the \emph{finite intersection property} if $\bigcap_{i \in F} A_i \neq \emptyset$ for every nonempty finite set $F$.
		\item $\COH$ is the problem whose instances are all families of sets $\seq{A_i : i \in \omega}$, with the solutions being all the infinite cohesive sets for this family.
		\item $\mathsf{FIP}$ is the problem whose instances are all families of sets $\seq{A_i : i \in \omega}$, not all empty, with the solutions being all the $\subseteq$-maximal subfamilies of $\seq{A_i : i \in \omega}$ that have the finite intersection property.
		\item $\Pi^0_1\mathsf{G}$ is the principle whose instances are all $f \in \omega^\omega$ and all $\Pi^{0,f}_1$-definable families $\seq{U_i : i \in \omega}$ of nonempty subsets of $2^{<\omega}$, with the solutions being  all sets $G \subseteq \omega$ that meet every $U_i$ (i.e., $(\forall i)(\exists k)[G \res k \in U_i]$).
	\end{enumerate}	
\end{definition}

\noindent (See, e.g., \cite{DM-2022}, Section 8.4.2 for a broader discussion of $\COH$, and Section 9.10.3 for a broader discussion of $\mathsf{FIP}$ and $\Pi^0_1\mathsf{G}$.) $\COH$ and $\Pi^0_1\mathsf{G}$ are undiagonalizable because every finite binary string can be continued to a solution of a given instance. Dzhafarov and Mummert \cite[Proposition 4.2]{DM-2013} showed that, as $\Pi^1_2$ principles, $\mathsf{FIP}$ is implied by $\Pi^0_1\mathsf{G}$ over $\RCA_0$, and their proof actually shows that $\mathsf{FIP} \ured \Pi^0_1\mathsf{G}$ as problems. $\mathsf{FIP}$ is not itself undiagonalizable, but it turns out to be undiagonalizable up to Weihrauch equivalence. Indeed, $\mathsf{FIP}$ clearly satisfies the weaker property in the hypothesis of the following result. 

\begin{proposition}
	Let $\mathsf{P}$ be a problem such that the set of $\sigma \in \omega^{<\omega}$ that can be extended to a $\mathsf{P}$-solution $g \in \omega^\omega$ to $f$ is uniformly $\Sigma^0_1$ in $f$. Then there is an undiagonalizable problem $\mathsf{Q}$ such that $\mathsf{P} \uequiv \mathsf{Q}$.
\end{proposition}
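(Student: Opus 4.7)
The plan is to construct $\mathsf{Q}$ from $\mathsf{P}$ by augmenting every value of a $\mathsf{P}$-solution with a convergence-stage witness that certifies the extendibility of the corresponding initial segment. First, using the hypothesis, I fix a Turing functional $\Gamma$ such that for every $\mathsf{P}$-instance $f$ and every $\sigma \in \omega^{<\omega}$, $\Gamma(f)(\sigma) \downarrow$ precisely when $\sigma$ extends to a $\mathsf{P}$-solution to $f$. For such $\sigma$, write $t(f,\sigma)$ for the least stage at which $\Gamma(f)(\sigma)$ converges; this is uniformly computable from $f$ and $\sigma$ by running $\Gamma$ until it halts.

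I then let $\mathsf{Q}$ have the same instances as $\mathsf{P}$, and declare $h \in \omega^\omega$ to be a $\mathsf{Q}$-solution to $f$ precisely when, under a fixed computable pairing on $\omega$, $h(n) = \seq{g(n), t(f, g \res (n+1))}$ for some $\mathsf{P}$-solution $g$ to $f$. The Weihrauch equivalence $\mathsf{P} \uequiv \mathsf{Q}$ is then immediate via the identity on instances: in one direction, a $\mathsf{P}$-solution $g$ is transformed into a $\mathsf{Q}$-solution by pairing each $g(n)$ with the computable stage $t(f, g \res (n+1))$; in the other, reading off the first coordinate of each $h(n)$ recovers a $\mathsf{P}$-solution.

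The substantive step is to verify undiagonalizability of $\mathsf{Q}$. Given $f$ and $\sigma \in \omega^{<\omega}$, I decode each $\sigma(n)$ as a pair $(u_n, v_n)$ and run $\Gamma(f)$ on the finite sequence $(u_0, \ldots, u_n)$ for $v_n$ steps, checking whether the least halting stage is exactly $v_n$. This is a uniformly computable test. If the test succeeds for every $n < |\sigma|$, then the last witness $v_{|\sigma|-1}$ certifies that $(u_0, \ldots, u_{|\sigma|-1})$ is extendible to a $\mathsf{P}$-solution $g$, and $g$ together with the canonical further witnesses yields a genuine $\mathsf{Q}$-solution extending $\sigma$. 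If the test fails at some $n$, then any $\mathsf{Q}$-solution $h$ extending $\sigma$ would satisfy $h(n) = \seq{u_n, v_n}$ with first coordinates $u_0, \ldots, u_n$ already determined by $\sigma$, forcing $v_n = t(f, (u_0, \ldots, u_n))$ and contradicting the failure; so no extension exists. The empty string is always extendible since $\dom(\mathsf{Q}) = \dom(\mathsf{P})$ guarantees that $\mathsf{Q}$-solutions exist.

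The crux, and what I expect would be the main obstacle for a less careful approach, is the decision to attach a certificate \emph{at every} position rather than interleaving values with witnesses at alternating positions: any sparser encoding leaves strings of the ``wrong'' parity without a certificate for their most recent $g$-value, so extendibility for such strings remains genuinely $\Sigma^0_1$. Pairing each value with its own witness removes this parity problem, renders the extendibility check a single uniform computation, and is preserved by the obvious back-and-forth translations between $\mathsf{P}$- and $\mathsf{Q}$-solutions.
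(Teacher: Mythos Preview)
Your proof is correct and follows essentially the same idea as the paper: attach, to each $\mathsf{P}$-solution, stage witnesses for the $\Sigma^0_1$ extendibility condition, so that extendibility of initial segments of $\mathsf{Q}$-solutions becomes decidable from $f$. The only difference is the encoding: the paper lets a $\mathsf{Q}$-solution be any sequence $\seq{s_0,\sigma_0}\seq{s_1,\sigma_1}\cdots$ with $s_0 \leq s_1 \leq \cdots$, $\sigma_0 \prec \sigma_1 \prec \cdots$, each $\sigma_k$ enumerated into the extendible set by stage $s_k$, and $\bigcup_k \sigma_k$ a $\mathsf{P}$-solution, whereas you pair each value $g(n)$ with the \emph{exact} least convergence stage of $\Gamma(f)$ on $g \res (n+1)$. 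Your encoding gives a bijection between $\mathsf{P}$- and $\mathsf{Q}$-solutions and makes the undiagonalizability test particularly clean; the paper's looser encoding permits arbitrary upper bounds on stages and arbitrary growth of the segments, but this extra freedom is not used anywhere.
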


\begin{proof}
	Fix $\mathsf{P}$, and let $W_e$ be such that for every $\mathsf{P}$-instance $f$, $W_e^f$ is the set of all the initial segments of the $\mathsf{P}$-solutions to $f$. Let $\mathsf{Q}$ be the problem with the same instances as $\mathsf{P}$, but with the solutions to a $\mathsf{Q}$-instance $f$ being all sequences of the form $\seq{s_0,\sigma_0}\seq{s_1,\sigma_1}\cdots \in \omega^\omega$ such that $s_0 \leq s_1 \leq \cdots$, $\sigma_0 \prec \sigma_1 \prec \cdots$, $\sigma_k \in W_e^f[s_k]$ for every $k$, and $\bigcup_{k \in \omega} \sigma_k$ is a $\mathsf{P}$-solution to $f$. It is easy to see that $\mathsf{Q}$ is undiagonalizable and that $\mathsf{P} \uequiv \mathsf{Q}$.
\end{proof}

\noindent It follows that $\mathsf{FIP}$ is Weihrauch equivalent to an undiagonalizable problem. By \Cref{P:undiag-to-trivial}, we can now conclude the following.

\begin{corollary}\label{prop:FOPART_COH}
	Each of $\fopart \COH$, $\fopart \mathsf{FIP}$, and $\fopart \Pi^0_1\mathsf{G}$ is uniformly computably true. 
\end{corollary}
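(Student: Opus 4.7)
The plan is to reduce each of the three cases to \Cref{P:undiag-to-trivial}, which says that the first-order part of any undiagonalizable problem is uniformly computably true. For $\COH$ and $\Pi^0_1\mathsf{G}$, this is essentially immediate from the discussion just before: for any $\COH$-instance $\seq{A_i : i \in \omega}$, every finite binary string $\sigma$ extends to a solution (e.g., pad $\sigma$ with $0$'s or with a cohesive set built past $|\sigma|$), and the same holds for $\Pi^0_1\mathsf{G}$ since generics meeting the given $\Pi^{0,f}_1$ classes can be built past any finite prefix. Consequently the set of extendible strings is all of $\omega^{<\omega}$ (or equivalently $2^{<\omega}$), which is trivially uniformly $\Delta^0_1$ in the instance, so both problems are undiagonalizable.

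For $\mathsf{FIP}$, the problem itself is \emph{not} undiagonalizable (a $\subseteq$-maximal subfamily is determined by a $\Sigma^0_2$ condition rather than a decidable one), so I cannot apply \Cref{P:undiag-to-trivial} directly. Instead, I invoke the proposition just before the corollary: the set of initial segments of $\mathsf{FIP}$-solutions to a given instance is uniformly $\Sigma^0_1$, because one can enumerate a finite initial segment $\sigma$ of a candidate $\subseteq$-maximal f.i.p.\ subfamily precisely by checking the finitely many intersection conditions witnessed by a stage, together with evidence that each omitted $A_j$ with $j < |\sigma|$ fails to have the f.i.p.\ when added to $\sigma$ (both of which are $\Sigma^0_1$). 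Hence $\mathsf{FIP}$ is Weihrauch equivalent to an undiagonalizable problem $\mathsf{Q}$.

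To conclude for $\mathsf{FIP}$, I combine \Cref{c:FPROBequalto1} and \Cref{P:undiag-to-trivial}: since $\mathsf{FIP} \uequiv \mathsf{Q}$, we have $\fopart \mathsf{FIP} \suequiv \fopart \mathsf{Q}$, and the latter is uniformly computably true by \Cref{P:undiag-to-trivial}; uniform computable truth is preserved under $\suequiv$ (indeed under $\ured$), so $\fopart \mathsf{FIP}$ is uniformly computably true as well.

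The only mild obstacle is making sure the claimed $\Sigma^0_1$ description of the initial segments of $\mathsf{FIP}$-solutions is stated carefully enough to invoke the preceding proposition — in particular, one must choose a coding of subfamilies for which ``$\sigma$ extends to a $\subseteq$-maximal f.i.p.\ subfamily'' is $\Sigma^0_1$. A convenient coding is via the characteristic function of the subfamily in $2^\omega$; then each finite guess is a bit-string declaring which $A_j$'s with $j < |\sigma|$ are kept, and extendibility reduces to the always-verifiable fact that any finite f.i.p.\ subfamily can be greedily completed to a maximal one. The remaining assertions are routine bookkeeping.
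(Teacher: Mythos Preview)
Your overall approach matches the paper's exactly: $\COH$ and $\Pi^0_1\mathsf{G}$ are undiagonalizable outright (every finite string extends to a solution), while for $\mathsf{FIP}$ one invokes the preceding proposition to pass to a Weihrauch-equivalent undiagonalizable problem, and then \Cref{c:FPROBequalto1} together with \Cref{P:undiag-to-trivial} finishes.

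There is, however, a genuine slip in your justification of the $\Sigma^0_1$ hypothesis for $\mathsf{FIP}$. Your first pass asks for ``evidence that each omitted $A_j$ with $j < |\sigma|$ fails to have the f.i.p.\ when added'': this is not the extendibility condition (it would describe a locally-maximal fragment, not an extendible one), and in any case the emptiness of a finite intersection is $\Pi^0_1$, not $\Sigma^0_1$. Your second pass, via the characteristic-function coding, does not repair this: if $A_0 = \omega$ then every maximal f.i.p.\ subfamily contains the set $\omega$, so any $\sigma$ with $\sigma(0)=0$ is \emph{not} extendible, and the greedy completion argument breaks down. The paper's (implicit) coding of a subfamily is as a \emph{sequence} $\seq{B_i : i \in \omega}$, so an initial segment merely commits to finitely many members $B_0,\ldots,B_{k-1}$ (or fragments thereof) without excluding anything. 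Under that coding, $\sigma$ is extendible iff the finitely many specified members have a common element, which is plainly $\Sigma^0_1$, and then greedy completion goes through. With this correction your argument is complete and identical in structure to the paper's.
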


The corollary nicely meshes with what is known about the first-order parts of $\COH$, $\mathsf{FIP}$, and $\Pi^0_1\mathsf{G}$ as $\Pi^1_2$ statements of second-order arithmetic. Each of these principles is $\Pi^1_1$-conservative over $\RCA_0$. For $\COH$, this fact is due to Cholak, Jockusch, and Slaman \cite[Theorem 9.1]{CJS-2001}. For $\Pi^0_1\mathsf{G}$, it is due for Hirschfeldt, Shore, and Slaman \cite[Theorem 3.13 and the comment on p.~5824]{HSS-2009}. The latter also implies this fact for $\mathsf{FIP}$. Thus, in these cases, the first-order strengths agree between the classical reverse mathematics and Weihrauch analysis settings: they are trivial.

We wrap up this section by looking at how being uniformly computably true and being undiagonalizable behave under the standard operations on Weihrauch degrees.

\begin{proposition}
	Fix $\mathsf{P},\mathsf{Q} \in \PROB$ with $\fopart \mathsf{Q}$ uniformly computably true.
	\begin{enumerate}
		\item $\fopart (\mathsf{P} \sqcup \mathsf{Q}) \ured \fopart \mathsf{P}$.
		\item  $\fopart (\mathsf{P} \sqcap \mathsf{Q}) \ured \fopart \mathsf{P}$.
		\item $\fopart \mathsf{P} \times \fopart \mathsf{Q} \ured \fopart \mathsf{P}$.
		\item $\fopart \mathsf{P} * \fopart \mathsf{Q} \ured \fopart \mathsf{P}$.
		\item $\fopart \mathsf{Q} * \fopart \mathsf{P} \ured \fopart (\mathsf{Q} * \mathsf{P}) \ured \fopart \mathsf{P}$.
		
	\end{enumerate}
	If $\mathsf{Q}$ is undiagonalizable, then additionally $\fopart (\mathsf{P} \times \mathsf{Q}) \ured \fopart \mathsf{P}$ and $\fopart (\mathsf{P} * \mathsf{Q}) \ured \fopart \mathsf{P}$.
\end{proposition}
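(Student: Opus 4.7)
The plan is to prove the seven claims in order, invoking \Cref{SV_character} wherever possible and letting $\Lambda$ denote a Turing functional witnessing that $\fopart\mathsf{Q}$ is uniformly computably true (so $\Lambda(\seq{f'',\Phi'',\Psi''})$ is a $\fopart\mathsf{Q}$-solution for every $\fopart\mathsf{Q}$-instance). For the last two clauses I would additionally use the functional $\Gamma$ witnessing undiagonalizability of $\mathsf{Q}$.

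For clauses (1)--(4), the pattern is uniform: apply the relevant part of \Cref{SV_character} to rewrite $\fopart$ of the combined problem as the corresponding combination of the individual first-order parts, then use $\Lambda$ to eliminate every $\fopart\mathsf{Q}$-component without actually querying the oracle. For (1), case-split on which side of $\sqcup$ the instance lies: a $\fopart\mathsf{P}$-side instance is forwarded to the oracle, while a $\fopart\mathsf{Q}$-side instance is solved by $\Lambda$ (and the oracle is called on any fixed dummy $\fopart\mathsf{P}$-instance whose output is discarded). For (2), since $\sqcap$ permits the solver to pick a side, always answer on the $\fopart\mathsf{P}$-side. For (3), solve the $\fopart\mathsf{P}$-component via the oracle and the $\fopart\mathsf{Q}$-component via $\Lambda$ in parallel. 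For (4), given $\seq{g,\Delta}$ with $g\in\dom(\fopart\mathsf{Q})$, first compute $y_Q=\Lambda(g)\in\fopart\mathsf{Q}(g)$, then feed $\Delta(g,y_Q)\in\dom(\fopart\mathsf{P})$ to the oracle, and pair the returned $y_P$ with $y_Q$.

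For (5), the first reduction is \Cref{SV_character}(4) with $\mathsf{P}$ and $\mathsf{Q}$ swapped. For the second, given a $\fopart(\mathsf{Q}*\mathsf{P})$-instance $\seq{f,\Phi,\Psi}$ with $\Phi(f)=\seq{g,\Delta}$ (so $g\in\dom(\mathsf{P})$ and $\Delta(g,\hat{g})\in\dom(\mathsf{Q})$ for every $\hat{g}\in\mathsf{P}(g)$), I would build the $\fopart\mathsf{P}$-instance $\seq{f,\Phi^*,\Psi^*}$ with $\Phi^*(f)=g$; given $\hat{g}\in\mathsf{P}(g)$, the functional $\Psi^*(f,\hat{g})(0)$ packages $\Delta(g,\hat{g})$ as the $\fopart\mathsf{Q}$-instance $\seq{\seq{f,\hat{g}},\Phi^{**},\Psi^{**}}$ with $\Phi^{**}$ outputting $\Delta(g,\hat{g})$ and $\Psi^{**}(\seq{f,\hat{g}},\hat{f})(0)=\Psi(f,\seq{\hat{f},\hat{g}})(0)$, then applies $\Lambda$ and returns its output. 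The additional $\times$-case under undiagonalizability adapts the argument of \Cref{P:undiag-to-trivial}: given $\Phi(f)=\seq{g_P,g_Q}$, set $\Phi^*(f)=g_P$ and have $\Psi^*(f,\hat{f})(0)$ search for $\sigma\in\omega^{<\omega}$ with $\Gamma(g_Q)(\sigma)=1$ and $\Psi(f,\seq{\hat{f},\sigma})(0)\downarrow$; such $\sigma$ exists by continuity of $\Psi$ and existence of a $\mathsf{Q}$-solution to $g_Q$, and by the finite-prefix convention the output equals $\Psi(f,\seq{\hat{f},\hat{g}})(0)$ for some $\hat{g}\in\mathsf{Q}(g_Q)$ extending $\sigma$.

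The main obstacle will be the additional $*$-case, $\fopart(\mathsf{P}*\mathsf{Q})\ured\fopart\mathsf{P}$, since now the $\mathsf{P}$-instance $\Delta(g,\hat{g})$ genuinely depends on $\hat{g}\in\mathsf{Q}(g)$ and the (5)-style strategy is unavailable. My plan is to use $\Gamma$ to build, stage by stage, a sequence $\sigma_0\prec\sigma_1\prec\cdots$ in the decidable tree $T_g=\{\sigma:\Gamma(g)(\sigma)=1\}$ of prefixes of $\mathsf{Q}$-solutions, choosing each $\sigma_n$ to be an extension of $\sigma_{n-1}$ in $T_g$ for which $\Delta(g,\sigma_n)(n)\downarrow$ (such $\sigma_n$ exists because $\sigma_{n-1}$ prefixes some $\hat{g}\in\mathsf{Q}(g)$ with $\Delta(g,\hat{g})(n)\downarrow$). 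This yields a $g$-computable path $\hat{g}^*=\bigcup_n\sigma_n\in[T_g]$ and a $g$-computable function $\Delta(g,\hat{g}^*)$, and I would take $\Phi^*(f)=\Delta(g,\hat{g}^*)$ together with $\Psi^*(f,\hat{f})(0)$ searching for $\sigma\prec\hat{g}^*$ with $\Psi(f,\seq{\hat{f},\sigma})(0)\downarrow$. The hardest step is justifying $\Phi^*(f)\in\dom(\mathsf{P})$: $\hat{g}^*$ need not itself belong to $\mathsf{Q}(g)$, so one must argue from continuity of $\Delta$ and the fact that each finite prefix of $\Delta(g,\hat{g}^*)$ coincides with that of $\Delta(g,\hat{g})$ for some actual $\hat{g}\in\mathsf{Q}(g)$ extending the current $\sigma_n$ that $\Delta(g,\hat{g}^*)$ still lies in $\dom(\mathsf{P})$; failing a clean closure argument, an extra layer invoking $\Lambda$ will be used to force the $\sigma_n$-construction to cohere with a genuine $\mathsf{Q}$-solution so that $\Psi(f,\seq{\hat{f},\sigma})(0)$ can be recognized as a $\fopart(\mathsf{P}*\mathsf{Q})$-solution.
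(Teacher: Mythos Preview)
Your treatment of parts (1)--(5) and of the additional $\times$-claim under undiagonalizability matches the paper's proof: the paper dismisses (1)--(4) as straightforward (invoking \Cref{SV_character} for (1) and (2)), handles (5) exactly via the chain $\fopart\mathsf{Q}*\fopart\mathsf{P}\ured\fopart(\mathsf{Q}*\mathsf{P})\ured\fopart\mathsf{Q}*\mathsf{P}\ured\mathsf{P}$ followed by passing to $\fopart\mathsf{P}$ using first-orderness (your hands-on version of the second reduction unwinds precisely this), and argues the $\times$-case by the prefix search you describe.

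For the additional $*$-claim the paper writes only ``the argument for $\fopart(\mathsf{P}*\mathsf{Q})$ is similar,'' giving no further details to compare against. You are right that this case is not a routine adaptation of the $\times$-argument: there the $\mathsf{P}$-instance $f_0$ is handed to you outright, whereas here the $\mathsf{P}$-instance $\Delta(g,\hat g)$ genuinely depends on a $\mathsf{Q}$-solution $\hat g$ that is not available in advance. Your main plan---building a computable path $\hat g^*=\bigcup_n\sigma_n$ through the decidable tree $T_g$ and feeding $\Delta(g,\hat g^*)$ to $\mathsf{P}$---has the gap you flag, and it is real: undiagonalizability guarantees only that each finite $\sigma_n$ extends to \emph{some} element of $\mathsf{Q}(g)$, not that the limit $\hat g^*$ is itself one (for $\mathsf{Q}=\COH$ one has $T_g=2^{<\omega}$, and your procedure could output the characteristic function of a finite set). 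Without $\hat g^*\in\mathsf{Q}(g)$ you get neither $\Delta(g,\hat g^*)\in\dom(\mathsf{P})$ nor that the returned $\hat f$ is part of an actual $(\mathsf{P}*\mathsf{Q})$-solution, so there is no reason $\Psi(f,\seq{\hat f,\sigma})(0)$ should be a valid output. Your fallback (``an extra layer invoking $\Lambda$'') is not a concrete plan: $\Lambda$ returns single numbers, not $\mathsf{Q}$-solutions or even prefixes thereof, and provides no mechanism for steering the $\sigma_n$-construction toward a genuine $\hat g\in\mathsf{Q}(g)$. As written, this clause remains unproved in your proposal---though, to be fair, the paper does not supply a detailed argument either.
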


\begin{proof}
	The first four parts are straightforward, using \Cref{SV_character} in the case of parts (1) and (2). Also by \Cref{SV_character}, we have that $\fopart \mathsf{Q} * \fopart \mathsf{P} \ured \fopart (\mathsf{Q} * \mathsf{P}) \ured \fopart \mathsf{Q} * \mathsf{P}$. Since $\fopart \mathsf{Q}$ is uniformly computably true, $\fopart \mathsf{Q} * \mathsf{P} \ured \mathsf{P}$. Now (5) follows because $\fopart \mathsf{Q} * \fopart \mathsf{P}$ and $\fopart (\mathsf{Q} * \mathsf{P})$ are first-order.
	
	Now suppose $\mathsf{Q}$ is undiagonalizable. We reduce each of $\fopart (\mathsf{P} \times \mathsf{Q})$ and $\fopart (\mathsf{P} * \mathsf{Q})$ to $\mathsf{P}$, which suffices. First, fix an instance $\seq{f,\Phi,\Psi}$ of $\fopart (\mathsf{P} \times \mathsf{Q})$, so that $\Phi(f)$ is an instance $\seq{f_0,f_1}$ of $\mathsf{P} \times \mathsf{Q}$. We map this to the $\mathsf{P}$-instance $f_0$. Given any $\mathsf{P}$-solution $g$ to $f_0$, we search for an initial segment $\sigma \in \omega^{<\omega}$ of a $\mathsf{Q}$-solution to $f_1$ such that $\Psi(f,\seq{g,\sigma})(0) \downarrow$. Note that this search is uniformly computable in $f$ since $\mathsf{Q}$ is undiagonalizable, and it must succeed since any sufficiently long initial segment of any $\mathsf{Q}$-solution to $f_1$ would work. The value of $\Psi(f,\seq{g,\sigma})(0)$ is then a $\fopart (\mathsf{P} \times \mathsf{Q})$-solution to $\seq{f,\Phi,\Psi}$. The argument for $\fopart (\mathsf{P} * \mathsf{Q})$ is similar.
\end{proof}

%In the above proof, we cannot replace the parallel product with the compositional, as the following example shows. Consider the problem $\mathsf{J} : \omega^\omega \to \omega^\omega$ defined by $\mathsf{J}(f) = f'$ for all $f \in \omega^\omega$. Then $\fopart \mathsf{J} \star \fopart \mathsf{J} \uequiv \fopart \mathsf{J}$, but $\fopart (\mathsf{J} \star \mathsf{J}) \nured \fopart \mathsf{J}$.

\section{Additional case studies}\label{sec:FOPARTS}

We have already classified the first-order parts of several problems whose first-order parts as theorems of second-order arithmetic were previously known. In this section, we look at several more examples. We begin with \emph{weak K\"{o}nig's lemma}, $\WKL$. As a theorem, $\WKL$ is famously $\Pi^1_1$-conservative over $\RCA_0$. (This is Harrington's theorem; see \cite[Section 7.2]{Hirschfeldt-2014} or \cite[Section 7.7]{DM-2022}.) Unlike $\COH$, $\mathsf{FIP}$, and $\Pi^0_1\mathsf{G}$ from \Cref{sec:UNIF}, $\WKL$ is not undiagonalizable, so it does not follow that its first-order part as a problem is also trivial, and indeed this turns out not to be the case. In the following theorem, we recall that \emph{weak weak K\"{o}nig's lemma}, $\WWKL$, is $\WKL$ with instances restricted to trees $T \subseteq 2^{<\omega}$ so that $[T] \subseteq 2^\omega$ has positive Lebesgue measure.

\begin{theorem}\label{fopart_wkl}
	For all $n \in \omega$, we have
	\[
		\fopart (\WWKL^{(n)}) \suequiv \fopart (\WKL^{(n)}) \suequiv (\mathsf{C}^*_2)^{(n)}.
	\]
\end{theorem}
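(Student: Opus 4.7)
The plan is to establish the chain of strong reductions
$(\C_2^*)^{(n)} \sured \fopart(\WWKL^{(n)}) \sured \fopart(\WKL^{(n)}) \sured (\C_2^*)^{(n)},$
which together yield the desired strong Weihrauch equivalences. The middle reduction is immediate from $\WWKL \sured \WKL$ (every $\WWKL$-instance is a $\WKL$-instance with the same solutions) and iterated application of \Cref{c:FPROBequalto1}.

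For the first reduction, I would first establish $\C_2 \sured \WWKL$ via the map $v \mapsto T_v := \{\sigma \in 2^{<\omega} : v[|\sigma|] \cap \{\sigma(0)\} = \emptyset\}$; the set of paths through $T_v$ is $I \times 2^\omega$ for the non-enumerated set $I \subseteq \{0,1\}$, hence has measure at least $1/2$, and any path's first coordinate recovers a $\C_2$-solution. Parallel products extend this to $\C_2^* \sured \WWKL$, and since strong reducibility passes through jumps, $(\C_2^*)^{(n)} \sured \WWKL^{(n)}$. Because $(\C_2^*)^{(n)}$ is first-order, \Cref{thm:mainEQUIV}(1) upgrades this to $(\C_2^*)^{(n)} \sured \fopart \WWKL^{(n)}$.

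The substantive step is the upper bound $\fopart(\WKL^{(n)}) \sured (\C_2^*)^{(n)}$. Focusing first on $n = 0$: given $\seq{g, \Phi, \Psi}$ with $T = \Phi(g)$, one searches $g$-computably for the least $s$ such that $\Psi(g, \sigma)(0) \downarrow$ within $s$ steps for every $\sigma \in T \cap 2^s$. This search must terminate by compactness of $[T]$ together with continuity of $\Psi$, which yield a uniform modulus of convergence; each $\sigma \in T \cap 2^s$ then has a determined value $y_\sigma = \Psi(g, \sigma)(0)$, and the $\fopart \WKL$-target is any $y_\sigma$ with $\sigma$ extendible in $T$. From $T$ and $s$ one then constructs a $\C_2^*$-instance whose $\C_2^*$-solution-tuple pins down an extendible $\sigma$ and encodes $y_\sigma$, exploiting that non-extendibility of a string in $T$ is uniformly $\Sigma^0_1$ in $g$. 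The case $n \geq 1$ relativizes: the tree $T$ is $\Delta^{0,g}_{n+1}$, the compactness search, value extraction, and $\C_2^*$-construction are all uniformly $g^{(n)}$-computable, and the resulting $\C_2^*$-instance is $\Delta^{0,g}_{n+1}$, i.e., an instance of $(\C_2^*)^{(n)}$.

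The main obstacle is that this last reduction must be strong rather than merely ordinary Weihrauch: the decoding cannot use $g$, so the $\C_2^*$-solution-tuple alone must determine $y_\sigma$. A careful direct construction using the binary-tree structure of $T$ is required, since a naive reduction via the closed-choice problem $\C_{|V|}$ on the candidate value set $V$ does not suffice in general (strong reducibility from $\C_M$ to $\C_2^*$ being delicate for $M \geq 3$). The $\C_2^*$-instance is therefore built with auxiliary $\C_2$-components that encode the value assignment $\sigma \mapsto y_\sigma$, so that the solution-tuple simultaneously specifies an extendible $\sigma$ and the bits of $y_\sigma$; verifying that this encoding behaves correctly under the $n$-fold jump is the remaining technical point.
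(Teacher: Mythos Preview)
Your overall plan is sound and would succeed, but the upper bound $\fopart\WKL \sured \C_2^*$ is handled quite differently in the paper. Rather than working with an arbitrary infinite binary tree and trying to encode tree-navigation plus all the candidate values $y_\sigma$ into a $\C_2^*$-instance, the paper first replaces $\WKL$ by the strongly Weihrauch equivalent problem $\DNR_2$. The point is that the tree $T_0$ of $\{0,1\}$-valued $\mathrm{DNC}^g$ strings has \emph{independent} coordinates: $\sigma \in T_0$ iff $\sigma(e) \neq \Phi_e(g)(e)$ for each $e < |\sigma|$. Hence finding a length-$k$ string in $T_0$ is literally a parallel product of $k$ many $\C_2$-questions. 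The paper then packs into the $\C_2^*$-instance (via constant $\C_2$-components) the index $i$ of $\Psi$, the bound $k$, and the finite string $g \res k$; from any solution one recovers $i$, $k$, $g \res k$, and a $\sigma \in T_0 \cap 2^k$, and simply recomputes $\Psi(g \res k,\sigma)(0)$. No table of values $y_\sigma$ is needed, and the ``which $\sigma$ is extendible'' question never arises in a coupled form.

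Your direct approach can be made to work too (per-node $\C_2$-components $v_\tau$ that, when $\tau$ is extendible, point to an extendible child; plus a unary encoding of $s$ and of every $y_\sigma$), but it is heavier: you must lay out the components so that the decoder, seeing only a bit-tuple, can parse $s$, navigate from the root using the $b_\tau$'s to reach some $\sigma$, and then look up $y_\sigma$ in a pre-tabulated block, all without access to $T$ or $g$. The $\DNR_2$ trick buys exactly the decoupling that lets one avoid this bookkeeping.

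Two smaller points. First, for the lower bound you write ``parallel products extend $\C_2 \sured \WWKL$ to $\C_2^* \sured \WWKL$''; this is fine but tacitly uses that $\WWKL$ is strongly closed under finite parallelization, which you should state (and which requires encoding the number $k$ of factors into the paths, as the paper's direct construction does). Second, for $n \geq 1$ your ``relativization'' is really an unpacking of \Cref{SV_character}(5), namely $\fopart(\WKL^{(n)}) \sured (\fopart\WKL)^{(n)}$, followed by monotonicity of the jump; citing that proposition would spare you re-verifying the limit-approximation details by hand.
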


\begin{proof}
	Fix $n$. We first show that $(\mathsf{C}^*_2)^{(n)} \sured \fopart (\WWKL^{(n)})$. By \Cref{thm:mainEQUIV}, since $(\mathsf{C}^*_2)^{(n)} \in \FPROB$, we can just show that $(\mathsf{C}^*_2)^{(n)} \sured \WWKL^{(n)}$. And since the jump operator is monotone on $\sured$, it suffices to show that $\mathsf{C}^*_2 \sured \WWKL$. To this end, let $\seq{v_0,\ldots,v_{k-1}}$ be any instance of $\C_2^*$. Define $T$ to be the set of all $\sigma \in 2^{<\omega}$ such that for all $e < |\sigma|$,
	\[
	\sigma(e) =
	\begin{cases}
		0 & \text{if } e < k,\\
		1 & \text{if } e = k,\\
		1-v_{e-k-1}(|\sigma|) & \text{if } k < e \leq 2k \text{ and } v_{e-k-1}(|\sigma|) < 2.
	\end{cases}
	\]
	Note that if $\sigma \in T$ and $v_{e-k-1}(|\tau|) \neq v_{e-k-1}(|\sigma|)$ for some $k < e \leq 2k$ and some $\tau \preceq \sigma$ then necessarily $v_{e-k-1}(|\tau|) = 2$, so trivially $\tau \in T$. Hence, $T$ is a tree. Furthermore, by construction, the elements of $[T]$ are precisely the sequences of the form $0^{k}1x_0 \cdots x_{k-1}g$, where $\seq{x_0,\ldots,x_{k-1}}$ is a $\C^*_2$-solution to $\seq{v_0,\ldots,v_{k-1}}$ and $g \in 2^\omega$ is arbitrary. It follows that the measure of $[T]$ is at least $2^{-2k-1}$ and so $T$ is an instance of $\WWKL$. Now if $p$ is any $\WWKL$-solution to $T$ then $k$ can be computably recovered as the least $e$ such that $p(e) = 1$, and then $\seq{p(k+e+1) : e < k}$ is a $\C^*_2$-solution to $\seq{v_0,\ldots,v_{k-1}}$.
	
	That $\fopart (\WWKL^{(n)}) \sured \fopart (\WKL^{(n)})$ is clear. It therefore remains only to show that $\fopart (\WKL^{(n)}) \sured (\mathsf{C}^*_2)^{(n)}$. By \Cref{SV_character}, $\fopart (\WKL^{(n)}) \sured (\fopart \WKL)^{(n)}$, so by the monotonicity of the jump operator on $\sured$, it suffices to show that $\fopart \WKL \sured \mathsf{C}_2^*$. We will work with the problem $\DNR_2$ instead of $\WKL$,
	%and so by the monotonicity of the jump operator on $\sured$, also $\WKL^{(n)} \suequiv \DNR_2^{(n)}$.
	whose instances are all $g \in 2^\omega$, with the solutions to any such $g$ being all $\{0,1\}$-valued functions that are diagonally noncomputable relative to $g$ (hereafter abbreviated DNC$^g$). By results of Brattka, Hendtlass, and Kreuzer~\cite[Corollary 5.3]{BHK-2017}, $\WKL \suequiv \DNR_2$
	
	Consider an instance of $\fopart \DNR_2$. Since the instances of $\DNR_2$ range over all elements of $\omega^\omega$, we may regard this simply as a pair $\seq{g,\Psi}$ where $g \in \omega^\omega$ and $\Psi(g,p)(0) \downarrow$ for every $\set{0,1}$-valued DNC$^g$ function $p$. Here, $\Psi$ is given by an index, $i$. Let $T_0 \subseteq 2^{<\omega}$ be the standard $g$-computable tree whose paths are precisely the $\{0,1\}$-valued DNC$^g$ functions. Let $T \subseteq 2^{<\omega}$ be the tree of all $\sigma$ of the form $0^i 1 \rho$ for $\rho \in T_0$.
	
	%We will work with the problem $\DNR_2$ instead of $\WKL$. By results of Brattka, Hendtlass, and Kreuzer~\cite[Corollary 5.3]{BHK-2017}, $\WKL \suequiv \DNR_2$, and so by the monotonicity of the jump operator on $\sured$, also $\WKL^{(n)} \suequiv \DNR_2^{(n)}$. The instances of $\DNR_2$ are all $g \in 2^\omega$, with the solutions to any such $g$ being all $\{0,1\}$-valued functions that are diagonally noncomputable relative to $g$ (hereafter abbreviated DNC$^g$).
	
	%Consider any instance of $\fopart (\DNR_2^{(n)})$. We can think of this as an $f \in \omega^\omega$ along with $\Delta^{0,f}_{n+1}$ indices for a $g \in 2^\omega$ and an index $i \in \omega$ for a Turing functional $\Psi$ such that $\Psi(g,p)(0) \downarrow$ for every $\{0,1\}$-valued DNC$^g$ function $p$. (More precisely, we should also approximate a functional $\Phi$ and work with $\Phi(g)$ in place of $g$. But since $\Phi(g)$ and $g$ range over arbitrary elements of $\omega^\omega$, we may omit this for convenience.) Let $T_0 \subseteq 2^{<\omega}$ be the standard $g$-computable tree whose paths are precisely the $\{0,1\}$-valued DNC$^g$ functions. Let $T \subseteq 2^{<\omega}$ be the tree of all $\sigma$ of the form $0^i 1 \rho$ for $\rho \in T_0$.
	
	By compactness, there is a $k$ such that $\Psi(g,\sigma)(0) \downarrow$ for every $\sigma \in T_0$ of length $k$. We have to encode $i$, $k$, and $g \res k$ into the solutions of an instance of $\C_2^*$. For each $e < k$, define $w_e : \omega \to 3$ by
	\[
		w_e(s) =
		\begin{cases}
			2 & \text{if } \Phi_{e}(g)(e)[s] \uparrow,\\
			\min \set{\Phi_{e}(g)(e)[s],1}  & \text{otherwise},
		\end{cases}
	\]
	for all $s$. Thus, each $w_e$ is an instance of $\C_2$, and if $1 - \lim_s w_e(s) \neq \Phi_{e}(g)(e)$ if the latter converges.
	
	Now consider the sequence
	\[
		1^i{}^\frown{}0^\frown{}\seq{1-g(e) : e < k}^\frown{}\seq{w_e : e < k},
	\]
	where we regard each of $0$, $1$, and $1-g(e)$ as a constant function $\omega \to 3$. So we have an instance $\vec{v} = \seq{v_j : j < i+1+2k}$ of $\C_2^*$. Since $k$ and each $w_e$ can be uniformly computed from our $\fopart \DNR_2$-instance $\seq{g,i}$, it follows that we can uniformly compute $\vec{v}$ from this data. This is then our desired instance of $\mathsf{C}_2^*$.
	
	Now let $\seq{x_j : j < i+1+2k}$ be any solution to this instance. Since $x_j \neq \lim_s v_j(s)$, we can computably recover $i$ as the least $j$ such that $x_j = 1$. Using $i$ and the length of the solution, we can next also recover $k$ and $g \res k$. Finally, as remarked above, for $i+1+k \leq j < i+1+2k$ we must have $x_j \neq \Phi_{j-i-1-k}(g)(j-i-1-k)$. In other words, the string $\sigma \in 2^k$ defined by $\sigma(e) = x_{i+1+k+e}$ for all $e < k$ belongs to $T_0$ and so $\Psi(g \res k,\sigma)(0) \downarrow$ by choice of $k$. By assumption on $\Psi$, the value of this computation is a $\fopart \DNR_2$-solution to the instance we started with. Since we have shown that we can uniformly computably obtain this value from $\seq{x_j : j < i+1+2k}$, the proof is complete.
\end{proof}

%\begin{proposition}\label{fopart_rt1}
%	$\rt^1_2 \suequiv \mathsf{C}'_2$.
%\end{proposition}
%
%\begin{proof}
%	First, fix an instance of $\C_2'$. Regard this as an $f \in \omega^\omega$ and a $\Delta^{0,f}_2$ index for an $f'$-computable instance $v : \omega \to 2$ of $\C_2$. The nonempty set $S = \set{i < 2 : i \neq \lim_s v(s)}$ is then uniformly $\Pi^0_2$ in $f$. Let $R$ be an $f$-computable predicate so that $i \in S$ if and only if $(\forall u)(\exists v)R(i,u,v)$. Define an $f$-computable coloring $c : \omega \to 2$ as follows. Given $x \in \omega$, search for the least $y$ such that $(\exists i < 2)(\forall u < x)(\exists v < y)R(i,u,v)$, and let $c(x)$ be the least witness $i$ for this $y$. Now suppose $i < 2$ is an $\rt^1_2$-solution to $c$. Then in particular there are infinitely many $x$ such that $(\forall u < x)(\exists v)R(i,u,v)$ and so $i \in S$. It follows that $i$ is a $\C_2$-solution to $v$, as wanted.
%	
%	Conversely, fix an instance $c$ of $\rt^1_2$. Define $v : \omega \to 3$ by
%	\[
%		v(s) =
%		\begin{cases}
%			0 & \text{if } (\forall x > s)[c(x) \neq 0],\\
%			1 & \text{if } (\forall x > s)[c(x) \neq 1],\\
%			2 & \text{otherwise},
%		\end{cases}
%	\]
%	for all $s$. Then $v$ is a uniformly $c'$-computable instance of $\C_2$, and we can take a $\Delta^{0,c}_2$ index for it to be an instance of $\C_2'$. Clearly, any $\C_2$-solution $i$ to $v$ is also an $\rt^1_2$-solution to $c$.
%\end{proof}

The preceding result refines the $\Pi^1_1$-conservation of $\WKL$ mentioned above in an interesting way. %, and reflects some more delicate results from the reverse mathematics literature.
Namely, it is known that $\WKL$ is $\Pi^1_1$-conservative not only over $\RCA_0$, but also over $\RCA_0^* + \mathsf{I}\Sigma^0_n$ and $\RCA_0^* + \mathsf{B}\Sigma^0_n$, for all $n \geq 1$. (See H\'{a}jek \cite[Corollary 3.14]{Hajek-1993} and Simpson and Smith \cite[Corollary 4.7]{SS-1986}.) More recently, Fiori-Carones, Ko\l{}odziejczyk, Wong, and Yokoyama \cite[Lemma 4.5]{Keita-NEW} defined a version of $\WKL$ for $\Delta^0_n$-definable trees, and proved that this is also $\Pi^1_1$-conservative over $\RCA_0^* + \mathsf{B}\Sigma^0_n$. Recall that in that Weihrauch degrees, $(\mathsf{C}_2^*)^{(n)}$ corresponds to $\mathsf{B}\Sigma^0_n$, so our theorem above is precisely analogous to the latter result.

We next turn to the \emph{arithmetical comprehension axiom}, $\ACA$, from reverse mathematics. Formally, this is $\RCA_0$ plus comprehension for all arithmetically-definable subsets of numbers. Often, it is presented in the form $(\forall X)[X' \text{ exists}]$, which relies on a formalization of computability theory in the base theory $\RCA_0$. (See \cite[Corollary 5.6.3]{DM-2022}.) This in turn has an obvious problem form as the \emph{Turing jump problem}, $\mathsf{TJ}$, whose instances are all $g \in \omega^\omega$, with the unique solution to any such $g$ being $g'$. But $\ACA_0$ is also equivalent to the statement $(\forall X)[X^{(n+1)} \text{ exists}]$, for any $n \in \omega$, so it could just as well be represented by $\mathsf{TJ}^{(n)}$ or even $\bigsqcup_{n \in \omega} \mathsf{TJ}^{(n)}$.
%These are the problems we consider next.

\begin{theorem}\label{ACA_fopart}
	For all $n \in \omega$,
	\[
		\fopart (\mathsf{TJ}^{(n)}) \suequiv \mathsf{C}_{\mathbb{N}}^{(n)}.
	\]	
\end{theorem}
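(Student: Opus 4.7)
The plan is to prove both directions separately, reducing each to the base case $n=0$ via the monotonicity of the jump operator on $\sured$ (as used in the proof of \Cref{fopart_wkl}) and Proposition \ref{SV_character}(5).

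For the easy direction $\mathsf{C}_{\mathbb{N}}^{(n)} \sured \fopart(\mathsf{TJ}^{(n)})$, I will first observe that $\mathsf{C}_{\mathbb{N}} \sured \mathsf{TJ}$: given a $\mathsf{C}_{\mathbb{N}}$-instance $v$, map it to the $\mathsf{TJ}$-instance $v$ itself; from the $\mathsf{TJ}$-solution $v'$ one computably recovers the least $i \in \omega$ such that $v(s) \neq i$ for all $s$, since this is a $\Pi^{0,v}_1$-in-$v$ property and $v$ itself is computable from $v'$. Applying $n$-fold monotonicity of the jump gives $\mathsf{C}_{\mathbb{N}}^{(n)} \sured \mathsf{TJ}^{(n)}$. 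Since jumps of first-order problems remain first-order, $\mathsf{C}_{\mathbb{N}}^{(n)} \in \FPROB$, and so \Cref{thm:mainEQUIV}(1) promotes the reduction to $\mathsf{C}_{\mathbb{N}}^{(n)} \sured \fopart(\mathsf{TJ}^{(n)})$.

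For the hard direction $\fopart(\mathsf{TJ}^{(n)}) \sured \mathsf{C}_{\mathbb{N}}^{(n)}$, Proposition \ref{SV_character}(5) iterated $n$ times gives $\fopart(\mathsf{TJ}^{(n)}) \sured (\fopart \mathsf{TJ})^{(n)}$, so by monotonicity it suffices to prove $\fopart \mathsf{TJ} \sured \mathsf{C}_{\mathbb{N}}$. Fix an instance $\seq{f,\Phi,\Psi}$ of $\fopart \mathsf{TJ}$, let $h = \Phi(f)$, and let $y_0 = \Psi(f,h')(0)$ be the unique $\fopart \mathsf{TJ}$-solution. I will build an $f$-computable $\mathsf{C}_{\mathbb{N}}$-instance $v$ whose non-enumerated elements all encode $y_0$. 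Using the pointwise-from-below approximation $h'[t](e) = 1 \iff \Phi_e(h)(e)[t] \downarrow$ (for $e < t$), at each stage $t$ attempt the computation $\Psi(f,h'[t])(0)$ with time bound $t$; if it converges to some value $y_t$, enumerate all pairs $\seq{y,k}$ with $y,k < t$ and $y \neq y_t$. The key convergence fact is that since $\Psi(f,h')(0) \downarrow$ uses only a finite initial segment $h' \res k$, the monotone approximation guarantees $h'[t] \res k = h' \res k$ for all sufficiently large $t$, and thus $y_t = y_0$ for all $t \geq$ some $t_0$. Consequently every $\seq{y,k}$ with $y \neq y_0$ is enumerated at a sufficiently late stage, while only finitely many $\seq{y_0,k}$ are enumerated (namely some from stages $t < t_0$), leaving a cofinite subset of $\{\seq{y_0,k} : k \in \omega\}$ unenumerated. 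The strong reduction on the solution side is then simply the projection $\seq{y,k} \mapsto y$, which recovers $y_0$ without consulting the original instance.

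The main obstacle is the correct bookkeeping of the stage-by-stage dynamics: the approximations $h'[t]$ and the partial computations $\Psi(f,h'[t])(0)$ can each fluctuate before stabilizing, and I must ensure both that $\seq{y_0,k}$ is enumerated only finitely often and that every wrong $y$ is eventually detected. A secondary technicality is packaging the construction into the precise $v : \omega \to \omega + 1$ format required by the definition of $\mathsf{C}_{\mathbb{N}}$ (with the contiguous-repetition constraint), which is routine but must be spelled out. Assuming these bookkeeping steps are handled, the rest is a direct application of \Cref{SV_character}(5), \Cref{thm:mainEQUIV}(1), and monotonicity of the jump.
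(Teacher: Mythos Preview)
Your proposal is correct and follows essentially the same route as the paper: both directions are reduced to the base case $n=0$ via \Cref{SV_character}(5), monotonicity of the jump on $\sured$, and \Cref{thm:mainEQUIV}(1), and the key reduction $\fopart\mathsf{TJ}\sured\mathsf{C}_{\mathbb{N}}$ is obtained by using a computable approximation to $\Psi(f,h')(0)$ to build a $\mathsf{C}_{\mathbb{N}}$-instance whose non-enumerated elements all project to the correct answer. The only cosmetic difference is that the paper's construction arranges for a \emph{unique} non-enumerated element $\seq{k,y_0}$, whereas yours leaves a cofinite set of pairs $\seq{y_0,k}$ unenumerated; both decode via the same projection, so this is immaterial.
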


\begin{proof}
	First, we show that $\fopart (\mathsf{TJ}^{(n)}) \sured \mathsf{C}_{\mathbb{N}}^{(n)}$. By \Cref{SV_character} and the monotonicity of the jump operator on $\sured$, it suffices to show that $\fopart \mathsf{TJ} \sured \mathsf{C}_{\mathbb{N}}$. So fix an instance of $\fopart \mathsf{TJ}$, which we may just think of as a pair $\seq{g,\Psi}$ where $g \in \omega^\omega$ and $\Psi(g,g')(0) \downarrow$. Let $m : \omega \to \omega$ be a $g$-computable limit approximation to the value of $\Psi(g,g')(0)$. Define $v : \omega \to \omega$ as follows. For each $s$, find the least $k$ such that $\seq{k,m(s)} \notin \ran(v \res s)$, and then let $v(s)$ be the least $ x$ different from $\seq{k,m(s)}$ and not in the range of $v \res s$. Note that since $y = \lim_s m(s)$ exists, there is a $k$ such that $\seq{k,y} \notin \ran(v)$. By construction, every $x \neq \seq{k,y}$ does belong to $\ran(v)$. Thus, $v$ is an instance of $\mathsf{C}_{\mathbb{N}}$ with $\seq{k,y}$ as its only solution. Moreover, $v$ is uniformly computable from $g$ because $m$ is, as desired.

	% First, fix an instance of $\fopart (\mathsf{TJ}^{(n)})$. We may think of this as an $f \in \omega^\omega$ along with $\Delta^{0,f}_{n+1}$ indices for a $g \in \omega^\omega$ and an index for a Turing functional $\Psi$ such that $\Psi(g,g^{(n+1)})(0) \downarrow$. Let $m : \omega \to \omega$ be an $f^{(n)}$-computable limit approximation to the value of $\Psi(g,g^{(n+1)})(0)$. Define $v : \omega \to \omega$ as follows. For each $s$, find the least $k$ such that $\seq{k,m(s)} \notin \ran(v \res s)$, and then let $v(s)$ be the least $ x$ different from $\seq{k,m(s)}$ and not in the range of $v \res s$. Note that since $y = \lim_s m(s)$ exists, there is a $k$ such that $\seq{k,y} \notin \ran(v)$. By construction, every $x \neq \seq{k,y}$ does belong to $\ran(v)$. Thus, $v$ is an instance of $\mathsf{C}_{\mathbb{N}}$ with $\seq{k,y}$ as its only solution. Now, $m$ is uniformly computable from $f^{(n)}$, so we can uniformly obtain a $\Delta^{0,f}_{n+1}$ index for $v$. Together with $f$, this index is then an instance of $\mathsf{C}^{(n)}_{\mathbb{N}}$. And since this has $\seq{k,y}$ as its only solution, this proves that $\fopart (\mathsf{TJ}^{(n)}) \sured \mathsf{C}_{\mathbb{N}}^{(n)}$, as desired.
	 
	 By \Cref{thm:mainEQUIV}, since $\mathsf{C}_{\mathbb{N}}^{(n)} \in \FPROB$, to show that $\mathsf{C}_{\mathbb{N}}^{(n)} \sured \fopart(\mathsf{TJ}^{(n)})$ it suffices to show that $\mathsf{C}^{(n)}_{\mathbb{N}} \sured \mathsf{TJ}^{(n)}$. By the monotonicity of the jump operator on $\sured$, for this it in turn suffices to show that $\mathsf{C}_{\mathbb{N}} \sured \mathsf{TJ}$. This is straightforward.
\end{proof}

\noindent Note that what the above actually shows is that $\mathsf{C}_{\mathbb{N}}^{(n)} \sured \mathsf{TJ}^{(n)} \sured \mathsf{UC}_{\mathbb{N}}^{(n)}$, where $\mathsf{UC}_{\mathbb{N}}^{(n)}$ is the \emph{unique choice problem on $\mathbb{N}$}, or $\mathsf{C}_{\mathbb{N}}$ restricted to instances with unique solutions. For completeness we note that the fact that $\mathsf{C}_{\mathbb{N}} \suequiv \mathsf{UC}_{\mathbb{N}}$ is well-known; see, e.g., Brattka, Gherardi, and Pauly \cite[Theorem 7.13]{BGP-TA}.

\begin{corollary}\label{cor:ACA_fopart}
	$\fopart \bigsqcup_{n \in \omega} \mathsf{TJ}^{(n)} \suequiv \bigsqcup_{n \in \omega} \mathsf{C}^{(n)}_{\mathbb{N}}$.	
\end{corollary}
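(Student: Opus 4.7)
My plan is to establish the two strong Weihrauch reductions separately, in each case reducing to a uniform-in-$n$ version of \Cref{ACA_fopart}.

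For $\bigsqcup_{n \in \omega} \mathsf{C}_{\mathbb{N}}^{(n)} \sured \fopart \bigsqcup_{n \in \omega} \mathsf{TJ}^{(n)}$, I observe that $\bigsqcup_{n \in \omega} \mathsf{C}_{\mathbb{N}}^{(n)}$ lies in $\FPROB$, since each summand does and the join merely tags solutions with their index. By \Cref{thm:mainEQUIV}(1), it then suffices to exhibit $\bigsqcup_{n \in \omega} \mathsf{C}_{\mathbb{N}}^{(n)} \ured \bigsqcup_{n \in \omega} \mathsf{TJ}^{(n)}$. On an instance $\seq{n, v}$, I will apply the uniform reduction $\mathsf{C}_{\mathbb{N}}^{(n)} \sured \mathsf{TJ}^{(n)}$ from the proof of \Cref{ACA_fopart} to convert $v$ into a $\mathsf{TJ}^{(n)}$-instance $g$, and output $\seq{n, g}$. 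From any solution $\seq{n, g^{(n)}}$ I recover a $\mathsf{C}_{\mathbb{N}}^{(n)}$-solution $y$ to $v$ using the backward functional of the same uniform reduction, and then $\seq{n, y}$ is the desired $\bigsqcup$-solution.

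For $\fopart \bigsqcup_{n \in \omega} \mathsf{TJ}^{(n)} \sured \bigsqcup_{n \in \omega} \mathsf{C}_{\mathbb{N}}^{(n)}$, I will take an instance $\seq{f, \Phi, \Psi}$ of the left-hand problem, compute $\Phi(f) = \seq{n, g}$, and thereby read off $n$. From this I form a ``slice'' $\fopart \mathsf{TJ}^{(n)}$-instance $\seq{f, \Phi', \Psi'}$, where $\Phi'(f) = g$ and $\Psi'(f, h) = \Psi(f, \seq{n, h})$; indices for $\Phi'$ and $\Psi'$ are uniformly computable from the input data and $n$. Applying the uniform reduction $\fopart \mathsf{TJ}^{(n)} \sured \mathsf{C}_{\mathbb{N}}^{(n)}$ from \Cref{ACA_fopart} to this slice yields a $\mathsf{C}_{\mathbb{N}}^{(n)}$-instance $v$, and I output $\seq{n, v}$. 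Given any solution $\seq{n, y}$, I extract $n$, compute the $n$-indexed backward functional, and apply it to $y$ to obtain a value of the form $\Psi'(f, g^{(n)})(0) = \Psi(f, \seq{n, g^{(n)}})(0)$ for some $g^{(n)} \in \mathsf{TJ}^{(n)}(g)$, which is precisely a $\fopart \bigsqcup_{n \in \omega} \mathsf{TJ}^{(n)}$-solution to $\seq{f, \Phi, \Psi}$.

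The main obstacle I anticipate is a careful check of uniformity in $n$: although \Cref{ACA_fopart} is stated for each fixed $n$, the plan above requires that both reductions be witnessed by a single pair of Turing functionals taking $n$ as a parameter. This should follow from the explicit base cases $\mathsf{C}_{\mathbb{N}} \sured \mathsf{TJ}$ and $\fopart \mathsf{TJ} \sured \mathsf{C}_{\mathbb{N}}$, which by inspection are uniform, together with the observation that the monotonicity of the jump operator on $\sured$ produces jumped reductions whose forward and backward functionals are computed uniformly from the original ones, so iterating preserves uniformity in the iteration count.
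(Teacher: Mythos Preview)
Your proposal is correct and matches the paper's approach: the corollary is stated without proof, as an immediate consequence of \Cref{ACA_fopart}, and your write-up simply makes explicit the uniformity-in-$n$ that this inference requires. The one point you might sharpen is that, for the second direction, the iteration uses not only the monotonicity of the jump but also \Cref{SV_character}(5), whose reduction $\fopart(\mathsf{P}') \sured (\fopart\mathsf{P})'$ is a purely syntactic transformation independent of $\mathsf{P}$ and hence uniform as well.
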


As is well-known, $\ACA_0$ is $\Pi^1_1$-conservative over Peano arithmetic, $\PA$. (See, e.g., \cite[Corollary 7.5]{Hirschfeldt-2014} for a proof.) Effectively, this means that the first-order strength of $\ACA_0$ is arithmetical induction. \Cref{cor:ACA_fopart} bears this out very directly, while \Cref{ACA_fopart} can then be seen as a stratification of this result that is impossible to extract in the classical reverse mathematics setting.

For our final case study, we look at \emph{Ramsey's theorem}, which has been the subject of much study in reverse mathematics and computable analysis. (A detailed overview can be found in \cite[Chapter 8 and Section 9.1]{DM-2022}.)

\begin{definition}
	Fix $X \subseteq \omega$ and $n,k \geq 1$.
	\begin{enumerate}
		\item $[X]^n$ denotes the set of all $\seq{x_0,\ldots,x_{n-1}} \in X^n$ with $x_0 < \cdots < x_{n-1}$.
		\item A \emph{$k$-coloring} (or \emph{coloring} for short) of $[X]^n$ is a map $c : [X]^n \to k$.
		\item A $k$-coloring $c : [X]^n \to k$ is \emph{stable} if for all $x \in [X]^{n-1}$, $\lim_s c(\vec{x},s)$ exists.
		\item A set $Y \subseteq X$ is \emph{homogeneous} for $c : [X]^n \to k$ if $c \res\, [Y]^n$ is constant.
		\item $\RT^n_k$ is the problem whose instances are all colorings $c : [\omega]^n \to k$, with the solutions to any such $c$ being all its infinite homogeneous sets.
		\item $\SRT^n_k$ is the restriction of $\RT^n_k$ to stable colorings.
		\item $\RT^n_{+} = \bigsqcup_{k \geq 2} \RT^n_k$ and $\SRT^n_{+} = \bigsqcup_{k \geq 2} \SRT^n_k$.
		\item $\RT^n_{\mathbb{N}} = \bigcup_{k \geq 1} \RT^n_k$ and $\SRT^n_{\mathbb{N}} = \bigcup_{k \geq 1} \SRT^n_k$.
	\end{enumerate}	
\end{definition}

\noindent The variants $\RT^n_{+}$ and $\RT^n_{\mathbb{N}}$ were introduced by Brattka and Rakotoniaina \cite[Definition 3.1]{BR-2017}. In both, the instances are $k$-colorings of exponent $n$ for \emph{some} $k$, with the difference being merely that in $\RT^n_{+}$, this $k$ is specified as part of the instance, whereas in $\RT^n_{\mathbb{N}}$ it is not. Thus, $\RT^n_{+} \ured \RT^n_{\mathbb{N}}$, and Brattka and Rakotoniaina \cite[Corollary 4.23]{BR-2017} proved that $\RT^n_{\mathbb{N}} \nured \RT^n_{+}$. But both problems correspond to one and the same $\Pi^1_2$ statement of second-order arithmetic, namely $(\forall k)\RT^n_k$, which is denoted in the reverse mathematics literature by $\RT^n$ or $\RT^n_{<\infty}$. Analogously for the stable variants, $\SRT^n_{+}$ and $\SRT^n_{\mathbb{N}}$.

The upper bounds in the following theorem were obtained independently by Sold\`{a} and Valenti \cite[Section 7.1]{SV-TA}, who were looking instead at the problems $\RT^n_k$ and $\SRT^n_k$ for finite values of $k$. The upper bound in the case $n = 0$ was also obtained, by different means, by Brattka and Rakotoniaina \cite{BR-2017}.

\begin{theorem}\label{FOPART_RAMSEY}
	For all $n \geq 1$, we have
	\[
		(\mathsf{C}_2^{(n)})^* \ured \fopart \SRT^n_{\mathbb{N}} \ured \fopart \RT^n_{\mathbb{N}} \ured (\mathsf{C}^{*}_2)^{(n)}
	\]
	and
	\[
		(\mathsf{C}_2^{(n)})^* \ured \fopart \SRT^n_{+} \ured \fopart \RT^n_{+} \ured (\mathsf{C}^{*}_2)^{(n)}.
	\]
\end{theorem}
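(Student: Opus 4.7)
The plan is to establish the two endpoints of each chain and let the intermediate inclusions follow automatically. Specifically, the middle reductions $\fopart \SRT^n_{\mathbb{N}} \ured \fopart \RT^n_{\mathbb{N}}$, $\fopart \SRT^n_{+} \ured \fopart \RT^n_{+}$, $\fopart \SRT^n_+ \ured \fopart \SRT^n_\mathbb{N}$, and $\fopart \RT^n_+ \ured \fopart \RT^n_\mathbb{N}$ all follow from \Cref{c:FPROBequalto1} applied to the respective trivial inclusions at the level of the underlying $\RT$-problems (dropping the bound $k$ or restricting to stable colorings). It therefore suffices to prove the lower bound $(\mathsf{C}_2^{(n)})^* \ured \fopart \SRT^n_{+}$, which then transfers to all four first-order parts, and the upper bound $\fopart \RT^n_{\mathbb{N}} \ured (\mathsf{C}_2^*)^{(n)}$, which likewise dominates the other three.

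For the lower bound, I plan to generalize the backward direction of the proof of \Cref{fopart_rt1}. A $\mathsf{C}_2^{(n)}$-instance unfolds, via iterated jumps, into an $n$-levels-of-limit approximation to a proper subset of $\{0,1\}$, which is precisely the structure encoded by the nested limit-colors of a stable $n$-ary $2$-coloring: given a stable $c_j \colon [\omega]^n \to 2$, the function $c_j^*(\vec{x}) = \lim_y c_j(\vec{x},y)$ is $\Delta^0_2$ and iterating produces a $\Delta^0_{n+1}$ ``limit color'' whose value equals the enumerated side of the $\mathsf{C}_2^{(n)}$-instance. So each $\mathsf{C}_2^{(n)}$-instance can be coded as a stable $c_j \colon [\omega]^n \to 2$ such that every infinite $c_j$-homogeneous set sits on an unenumerated color. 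Given $m$ parallel instances, I form the product coloring $c(\vec{x}) = \langle c_0(\vec{x}), \ldots, c_{m-1}(\vec{x}) \rangle$, which is a stable $\SRT^n_+$-instance with $2^m$ colors (provided as part of the encoding). Any infinite $c$-homogeneous set has a color that coordinatewise solves all $m$ original instances, and this color is read off by a first-order extractor $\Psi$ from any sufficiently long homogeneous initial segment.

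For the upper bound $\fopart \RT^n_{\mathbb{N}} \ured (\mathsf{C}_2^*)^{(n)}$, I use Ramsey compactness combined with the $(\mathsf{C}_2^*)^{(n)}$--$\mathsf{B}\Sigma^0_{n+1}$ correspondence highlighted after \Cref{SV_character}. Given $\seq{f, \Phi, \Psi}$ with $c = \Phi(f) \colon [\omega]^n \to k$ (where $k$ can be read uniformly off the $\RT^n_\mathbb{N}$-encoding), every valid first-order solution has the form $\Psi(f, F)(0)$ for some finite $c$-homogeneous $F$ that extends to an infinite $c$-homogeneous set; once we fix a color $i < k$ admitting an infinite $i$-homogeneous set, a computable search suffices to locate such an $F$ with $\Psi(f, F)(0) \downarrow$. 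The predicate ``color $i < k$ admits an infinite $i$-homogeneous set for $c$'' is $\Sigma^0_{n+1}$-definable uniformly in $i$ and $c$, via the standard unpacking of Ramsey's theorem through iterated jumps, and by Ramsey's theorem at least one such $i$ exists; so a single $(\mathsf{C}_2^*)^{(n)}$-query of arity $k$ suffices to select one.

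The main obstacle will be the upper bound's definability bookkeeping: showing that the color-selection step really fits into a single $(\mathsf{C}_2^*)^{(n)}$-query requires a careful analysis of the $\Pi^0_n$ and $\Sigma^0_n$ definability of the ``extendibility'' predicates, in the spirit of the Fiori-Carones--Ko\l{}odziejczyk--Wong--Yokoyama argument cited in connection with \Cref{fopart_wkl}. The lower bound is conceptually cleaner but requires care in iterating $n$ levels of limits so that the product coloring genuinely lands in $\SRT^n_+$ rather than only in $\RT^n_+$; once this stability is verified, the reduction itself is direct.
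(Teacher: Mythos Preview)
Your lower bound and the middle reductions are fine and essentially match the paper's approach (the paper routes the lower bound through $\mathsf{BWT}_2^{(n-1)}$ and then cites the product-coloring fact $(\SRT^n_2)^* \ured \SRT^n_+$, but the content is the same: iterated limits turn a $\C_2^{(n)}$-instance into a stable $2$-coloring of $[\omega]^n$, and the product coloring handles the $(\cdot)^*$).

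The upper bound, however, has a real gap. Your key claim---that ``color $i$ admits an infinite $i$-homogeneous set for $c$'' is $\Sigma^0_{n+1}$ (or, for that matter, $\Pi^0_{n+1}$) uniformly in $i$ and $c$---is false for $n \geq 2$. For instance, identifying the nodes of an arbitrary computable tree $T \subseteq \omega^{<\omega}$ with $\omega$ and setting $c(x,y) = 0$ iff $x$ and $y$ are comparable in $T$ gives a coloring $c : [\omega]^2 \to 2$ for which ``color $0$ admits an infinite homogeneous set'' is equivalent to ``$T$ has an infinite path'', a $\Sigma^1_1$-complete predicate. So there is no ``standard unpacking through iterated jumps'' that places this predicate at level $n+1$ of the arithmetical hierarchy. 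There is a second gap as well: even if an oracle handed you a good color $i$, your proposed computable search for a finite $i$-homogeneous $F$ that both makes $\Psi(f,F)(0)$ converge \emph{and} extends to an infinite $i$-homogeneous set does not work for $n \geq 2$, since finite $i$-homogeneous sets can be dead ends even when some infinite $i$-homogeneous set exists.

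The paper sidesteps both issues by not analyzing homogeneity directly. It instead invokes the known reduction $\RT^n_{\mathbb{N}} \ured \WKL^{(n)}$ (due to Wang and, independently, Brattka--Rakotoniaina), which already packages the Ramsey combinatorics into a $\Pi^{0,c^{(n)}}_1$ class. From there, $\fopart \RT^n_{\mathbb{N}} \ured \fopart(\WKL^{(n)}) \suequiv (\C_2^*)^{(n)}$ is immediate from \Cref{fopart_wkl}. To repair your argument you would effectively have to reprove $\RT^n_{\mathbb{N}} \ured \WKL^{(n)}$; quoting it is the cleaner route.
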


\begin{proof}
	Fix $n$. We have $\SRT^n_{+} \ured \SRT^n_{\mathbb{N}} \ured \RT^n_{\mathbb{N}}$ and $\SRT^n_{+} \ured \RT^n_{+} \ured \RT^n_{\mathbb{N}}$. Ergo, since $(\mathsf{C}_2^{(n)})^* \in \FPROB$, it suffices to show that $(\mathsf{C}_2^{(n)})^* \ured \SRT^n_{+}$, and that $\fopart \RT^n_{\mathbb{N}} \ured (\mathsf{C}^{*}_2)^{(n)}$.
	
	For the first reudction, we first note that by \Cref{fopart_rt1} and the monotonicity of the jump operator on $\sured$ we have $\mathsf{C}^{(n)}_2 \sured \mathsf{BWT}_2^{(n-1)}$. It is easy to see that $\mathsf{BWT}_2^{(n-1)} \ured \SRT^n_2$. Indeed, consider any instance of $\mathsf{BWT}_2^{(n-1)}$. We regard this as a sequence $\seq{c_{\vec{s}} : \vec{s} \in \omega^{n-1}}$ of colorings $c_{\vec{s}} : \omega \to 2$ such that $\lim_{\vec{s}} c_{\vec{s}}(x)$ exists for every $x$. Denote this limit by $c(x)$, thereby defining a coloring $c : \omega \to 2$. We define a coloring $d : [\omega]^n \to 2$ by $d(x,\vec{s}) = c_{\vec{s}}(x)$.  Then $d$ is an instance of $\SRT^n_2$ uniformly computable from $\seq{c_{\vec{s}} : \vec{s} \in \omega^{n-1}}$ with the property that $\lim_{\vec{s}} d(x,\vec{s}) = c(x)$ for all $x$. Now if $H$ is any $\SRT^n_2$-solution for $d$ then $d(H) = \lim_{\vec{s}} d(x,\vec{s})$ for every $x \in H$, so $H$ is an infinite homogeneous for $c$ with color $d(H)$. It follows that $d(H)$, which is uniformly computable from $d \oplus H \Tred\seq{c_{\vec{s}} : \vec{s} \in \omega^{n-1}} \oplus H$, is a $\mathsf{BWT}_2^{(n-1)}$-solution to $\seq{c_{\vec{s}} : \vec{s} \in \omega^{n-1}}$, as wanted. We can thus also conclude that $(\mathsf{C}^{(n)}_2)^* \ured (\SRT^n_2)^*$. But for any $k \geq 1$, the $k$-fold product $\SRT^n_2 \times \cdots \times \SRT^n_2$ is Weihrauch reducible to $\SRT^n_{2^k}$. (See, e.g., Dorais et al.~\cite{DDHMS-2016}, Proposition 2.1.) Thus, $(\SRT^n_2)^* \ured \SRT^n_+$ and so also $(\mathsf{C}_2^{(n)})^* \ured \SRT^n_{+}$. This is what was to be shown.
	
	We next show that $\fopart \RT^n_{\mathbb{N}} \ured (\mathsf{C}^{*}_2)^{(n)}$. As shown by Wang \cite[Theorem 4.2]{Wang-U} and independently by Brattka and Rakotoniaina \cite[Corollary 4.15]{BR-2017}, we have $\RT^n_{\mathbb{N}} \ured \WKL^{(n)}$. Thus $\fopart \RT^n_{\mathbb{N}} \ured \fopart \WKL^{(n)}$, and now the desired conclusion follows by \Cref{fopart_wkl}.
\end{proof}

In reverse mathematics, the first-order parts of $\SRT^n_{<\infty}$ and $\RT^n_{<\infty}$ are now fully understood. Hirst \cite[Theorem 6.4]{Hirst-1987} showed that $\RT^1_{<\infty}$ is equivalent over $\RCA_0$ to $\mathsf{B}\Sigma^0_2$, and so this is its first-order part. By results of Jockusch \cite[Lemma 5.9]{Jockusch-1972} and Simpson \cite[Theorem III.7.6]{Simpson-2009}, if $n \geq 3$ then $\RT^n_{<\infty}$ is equivalent over $\RCA_0$ to $\ACA_0$, and so the first-order part of $\RT^n_{<\infty}$ is arithmetical induction. For $n = 2$, the classification is more recent. Cholak, Jockusch, and Slaman \cite[Theorem 11.4]{CJS-2001} showed that $\SRT^2_{<\infty}$ implies $\mathsf{B}\Sigma^0_3$, while Slaman and Yokoyama \cite[Theorem 2.1]{SY-2018} showed that $\RT^2_{<\infty}$ is $\Pi^1_1$-conservative over $\RCA_0 + \mathsf{B}\Sigma^0_3$. In light of the correspondence, mentioned in \Cref{sec:intro}, between induction and bounding schemes on the one hand, and jumps of choice problems on the other, these bounds comport with those given by \Cref{FOPART_RAMSEY}. However, the theorem leaves a gap, as $(\mathsf{C}^{*}_2)^{(n)} \nured (\mathsf{C}_2^{(n)})^*$ when $n \geq 1$. (We leave this as an exercise to the reader. In both problems, instances are finite sequences of approximations to instances of $\mathsf{C}_2^*$. But in the case of $(\mathsf{C}_2^{(n)})^*$ the length of each such sequences is explicitly a part of the instance, while in the case of $(\mathsf{C}^{*}_2)^{(n)}$ the length is itself approximated.) This gap raises the following question, with which we conclude.

\begin{question}
	Can the first-order parts of $\SRT^n_{+}$, $\RT^n_{+}$, $\SRT^n_{\mathbb{N}}$, and $\RT^n_{\mathbb{N}}$ be more precisely characterized?
\end{question}

\end{document}